\let\noi=\noindent
\let\sse=\subseteq
\let\limply=\Longrightarrow
	\newtheorem*{Dthm}{Theorem (Djordjevi\'{c})}
	\newtheorem*{JVthm}{Theorem (Johnson--Vinoth)}
	\newtheorem*{HKthm}{Theorem (Hartwig--Katz)}
\def\0{\{0\}}
\def\span{{\kern.5pt{\rm span}\,}}
\def\CC{{\mathbb C\kern.5pt}}
\def\DD{{\mathbb D\kern.5pt}}
\def\TT{{\mathbb T\kern.5pt}}
\def\B{{\mathcal B}}
\def\C{{\kern.5pt\mathcal C}}
\def\H{{\mathcal H}}
\def\M{{\mathcal M}}
\def\N{{\mathcal N}}
\def\R{{\mathcal R}}
\def\BH{{\B[\H]}}
\newcommand{\NL}{\mathcal{NL}}
\def\smallmatrix#1{\null\,\vcenter{
                   \baselineskip=8pt\mathsurround=0pt\ialign{
                   \hfil ${\scriptstyle##}$
                   \hfil &&
                   \hfil ${\scriptstyle##}$
                   \hfil \crcr
                   \mathstrut \crcr
                   \noalign{\kern-\baselineskip}#1 \crcr
                   \mathstrut \crcr
                   \noalign{\kern-\baselineskip} \crcr }}\!}
\newtheorem{theorem}{Theorem}[section]
\newtheorem{lemma}[theorem]{Lemma}
\newtheorem{question}[theorem]{Question}
\newtheorem{corollary}[theorem]{Corollary}
\newtheorem{proposition}[theorem]{Proposition}
\theoremstyle{definition}
\newtheorem{remark}[theorem]{Remark}
\theoremstyle{example}
\newtheorem{example}[theorem]{Example}
\begin{document}

\vglue-20pt
\hfill{\it Linear Algebra and its Applications}\/
{\bf 671} (2023) 38--58
\vglue20pt
\title
[Products of Posinormal Operators]
{Closed-range posinormal operators and their products}
\author{Paul Bourdon}
\address{Department of Mathematics, University of Virginia,
         Charlottesville, USA}
\email{psb7p@virginia.edu}
\author{Carlos Kubrusly}
\address{Department of Electrical Engineering, Catholic University,
         Rio de Janeiro, Brazil}
\email{carlos@ele.puc-rio.br}
\author{Trieu Le}
\address{Department of Mathematics and Statistics, University of Toledo,
         Toledo, USA}
\email{trieu.le2@utoledo.edu}
\author{Derek Thompson}
\address{Department of Mathematics, Taylor University,
         Upland, USA}
\email{theycallmedt@gmail.com}
\subjclass{Primary 47B20; Secondary 47B15}
\thanks{{\it Keywords.}\/ Closed range, hyponormal, posinormal, 
quasiposinormal, EP, Hypo-EP operators}
\date{October 24, 2022}

\begin{abstract}
We focus on two problems relating to the question of when the product of
two posinormal operators is posinormal, giving (1) necessary conditions and
sufficient conditions for posinormal operators to have closed range, and (2)
sufficient conditions for the product of commuting closed-range posinormal
operators to be posinormal with closed range. We also discuss the
relationship between posinormal operators and EP operators (as well as
hypo-EP operators), concluding with a new proof of the Hartwig-Katz Theorem,
which characterizes when the product of posinormal operators on $\CC^n$ is
\hbox{posinormal}. 
\end{abstract}

\maketitle

\section{Introduction}

Throughout this paper the term {\it operator}\/ means a bounded linear
transformation of a Hilbert space into itself$.$ An operator is
{\it posinormal}\/ if its range is included in the range of its adjoint$.$
The class of posinormal operators includes the class of hyponormal
operators$.$ An operator is {\it quasiposinormal}\/ if the closure of its
range is included in the closure of the range of its adjoint --- if ether of
these ranges is closed, then they are both closed and the concepts of
posinormal and quasiposinormal coincide.

\vskip6pt
A necessary and sufficient condition for the product of a pair of
closed-range operators to have a closed range is given in Theorem 1 of
\cite{B}, which also provides an example of a closed-range operator whose
square does not have a closed range \cite[Corollary 5]{B}$.$ A simpler
example is given in \cite[Example 1]{BKT}$.$ The square of a posinormal
operator is not necessarily posinormal \cite[Example 1]{KVZ} but every
positive-integer power of a posinormal operator with closed range is
posinormal with closed range \cite[Corollary 14]{JV} (and so powers of a
hyponormal operator with closed range have closed range)$.$ The closed-range
assumption is crucial even if the operator and its adjoint are both
posinormal: Proposition 4.3  of \cite{BT} describes examples of
non-closed-range posinormal operators having posinormal adjoints for which
all sufficiently large powers fail to be posinormal$.$ The fact that powers
of a closed-range posinormal operator is again a closed-range posinormal
operator prompts the following question.

\begin{question}\label{Q:1.1}
{\it Is the product of two commuting posinormal operators, both with closed
range, a posinormal operator with closed range}$\,?$
\end{question}

\vskip0pt
Motivated by the preceding (still open) question, we explore necessary
conditions and sufficient conditions for posinormal operators to have closed
range, as well as investigate the structure of matrix representations of a
pair of commuting, closed range posinormal operators on a Hilbert space $\H$
relative to a natural orthogonal decomposition of $\H.$ We note that
Question \ref{Q:1.1} above has an affirmative answer if ``posinormal'' is
replaced by ``normal'' because (i) the product of two commuting normal
operators is normal, thanks to Fuglede's Theorem, and (ii) the product of
two commuting normal operators $A$ and $B$ having closed range will also
have closed range by, e.g., Proposition~\ref{P:2.2} below: the kernel of
$A$ will be reducing for $B$ by Fuglede's Theorem, showing that the operator
$Y$ of part (a) of Proposition~\ref{P:2.2} must be the zero operator, so
that the hypotheses  part (b) of Proposition~\ref{P:2.2} hold$.$ The
product of two closed-range normal operators that do not commute need not
have closed range --- see Example~\ref{TE} below.

\vskip6pt
{\it A normal operator has closed range if and only if\/ $0$ is not a limit
point of its spectrum}\/ (e.g,. set\/ ${\lambda=0}$ in\/
\cite[Proposition XI.4.5]{CFA})$.$ In Section 3, we identify three
properties of Hilbert-space operators, each one of which normal operators
possess, such that if $T$ is an operator on a Hilbert space $\H$ having
these three properties, then $T$ has closed range if and only if $0$ is not
a limit point of the spectrum of $T.$ As corollaries of this result, we
show that
\begin{itemize}
\item if $T$ is a hyponormal operator such that $0$ is not a limit point of 
the spectrum of $T$, then the range of $T$ is closed (Corollary \ref{C:3.3}),
\item if $T$ is a posinormal operator such  that $0$ is not a limit point of
the spectrum of $T$ and the restriction of $T$ to the orthogonal complement
of the kernel of $T$ is isoloid, then the range of $T$ is closed
(Corollary \ref{C:3.4}), and \item if $T$ is a posinormal operator with
closed range, then $0$ is not a limit point of the spectrum of $T$ if and
only if the adjoint of $T$ is also posinormal (Proposition \ref{P:3.5}).
\end{itemize}

\vskip6pt
If $A$ is a posinormal operator on a Hilbert space $\H$, then, by
definition, the range of $A$ is contained in the range of $A^*$, and, upon
taking orthogonal complements, we see that the kernel  $A$ is a subset of
the kernel of $A^*.$ Thus, for a posinormal operator $A$ on $\H$, the kernel
$\N(A)$ of $A$ is a subspace of $\H$ that reduces $A.$ In general, if $B$ is
an operator in the commutant of $A$ and if the kernel $\N(A)$ of $A$ reduces
$A$, then relative to the orthogonal decomposition
$\H={\N(A)^\perp\oplus\N(A)}$, the operators $A$ and $B$ have the following
matrix representations:
$$
A=\big(\smallmatrix{A' & O \cr
                    O  & O \cr}\big)
\quad\hbox{and}\quad
B=\big(\smallmatrix{B' & O \cr
                    Y  & Z \cr}\big).
$$
In Section 4 (Corollary \ref{C:4.3}), we give a sufficient condition
for the product of closed-range commuting posinormal operators to be
posinormal with closed range:
\vskip6pt\noi
{\narrower{\it
If\/ $A$ and\/ $B$ are commuting posinormal operators with closed range,
then\/ $AB$ is posinormal with closed range if\/ $B'$ and\/ $Z^*$ are
posinormal\/.
}\vskip6pt}
\noindent Moreover, using the matrix representations described above, we
show (Theorem \ref{T:4.6}) that if $A$ and\/ $B$ are commuting posinormal
operators with closed range and one of $A$ and $B$ has posinormal adjoint,
then\/ $AB$ is posinormal with closed range, a result that generalizes
\cite[Theorem 3]{DD}.

\vskip6pt
Of course, products of noncommuting posinormal operators can be posinormal$.$
 One can check that for the posinormal operators
$G=\big(\smallmatrix{1 & 1 \cr
                     0 & 1 \cr}\big)
\quad\hbox{and}\quad
P=\big(\smallmatrix{1 & 0 \cr
                    0 & 0 \cr}\big)$ on $\CC^2$, the operator $GP$ is
posinormal, but $PG$ is not (and all these operators have closed range
because all linear operators on $\CC^n$ have closed range)$.$ Another
example: on any complex Hilbert space a pair of non-commuting unitary
operators will be a pair of non-commuting, closed-range normal operators
whose product is normal with closed range (in fact, unitary). 
                    
\vskip6pt
The product of two closed-range normal operators need not have closed range: 

\begin{example} \label{TE}
There exist normal operators $A$ and $B$ having closed range such that $AB$
does not have closed range.\end{example}

\vskip0pt
Let ${(e_j)_{j=0}^\infty}$ be the natural basis of $\ell^2$ so that the
sequence $e_j$ has $1$ as its $j$-th term and zeros elsewhere$.$ Let
${\M}_{1}$ be the closure of the span of $\{e_{2k}: k = 0,1,...\}$ and
${\M}_{2}$ be the closure of the span of
$\{g_k:=e_{2k}+e_{2k+1}/(2k+1):k=0,1,\ldots\}.$ (Note that
$\{g_k:k\ge0\}$ is orthogonal.) Set $A=(I-P_{{\M}_{1}})$ (i.e., orthogonal
projection onto the orthogal complement of ${\M}_{1}$) and $B=P_{{\M}_{2}}.$
We use Bouldin's criterion, stated below, to establish that AB does not have
a closed range.   

\vskip2pt\noi
\begin{quotation}
Bouldin's Criterion \cite{B}: {\it If $S$ and $T$ are operators on
$\mathcal{H}$ having closed range then $ST$ also has closed range if and
only if the angle between $\R(T)$ and $\N(S)\cap(\N(S)\cap\R(T))^\perp$ is
positive.}
\end{quotation}

\vskip2pt\noi
Observe that $\N(A)={\M}_{1}$ and $\R(B)={\M}_{2}$, so that
$\N(A)\cap\R(B)={\M}_{1}\cap{\M}_{2}=\{0\}.$ Thus,
$(\N(A)\cap\R(B))^\perp=\H.$ We show that the angle between $\R(B)={\M}_{2}$
and $\N(A)\cap (\N(A)\cap\R(B))^\perp=\N(A)={\M}_{1}$ is $0$, showing the
range of $AB$ is not closed by Bouldin's Criterion.

\vskip6pt\noi
The angle $\theta$ between ${\M}_{1}$ and ${\M}_{2}$ of $\mathcal{H}$ is
given by
$$
\theta
=\cos^{-1}\left(\rule{0in}{.15in}\sup\{|\langle f, g\rangle|:
f\in {\M}_{1},g\in {\M}_{2},\|f\|=1=\|g\|\}\right),
$$
where $\langle\cdot,\cdot\rangle$ denotes the inner product of $\ell^2.$ For
$n\ge 0$, let
$$
f_n=e_{2n}
\quad\text{and}\quad
g_n=\frac{\left(e_{2n}+\frac{e_{2n+1}}{2n+1}\right)}{\sqrt{1+ 1/(2n+1)^2}}
$$
and observe that $(f_n)$ and $(g_n)$ are sequences of unit vectors such that
$\langle f_n,g_n\rangle=1/\sqrt{1+ 1/(2n+1)^2}\rightarrow 1$, as
$n\to\infty.$ We see the angle between ${\M}_{1}$ and ${\M}_{2}$ is $0$, as
desired. \qed

\vskip6pt 
For the normal operators $A$ and $B$ of the preceding example, it's easy to
show that $AB$ is not normal$.$ In general, it's possible to show that for
orthogonal projections $A$ and $B$, the product $AB$ is normal if and only
if $A$ and $B$ commute (and the projections $A$ and $B$ of Example~\ref{TE}
do not commute).

\vskip6pt 
The paper is organized into four more sections$.$ Notation, terminology
and auxiliary results are considered in Section 2$.$ The results
summarized above are treated in Sections 3 and 4$.$ Section 5 brings a
detailed discussion of EP operators and matrices and how they relate to
posinormal operators and matrices, concluding with a discussion of, as well
as a new proof of, the Hartwig--Katz Theorem, which characterizes when the
product of two posinormal matrices is a posinormal matrix.

\section{Notation, Terminology, and Auxiliary Results}

Let $\H$ be an infinite-dimensional complex Hilbert space$.$ If $\M$ is a
subspace of $\H$, then we let $\M^-$ denote its closure and $\M^\perp$ its
orthogonal complement$.$ The algebra of all operators on $\H$ will be
denoted by $\BH.$ For any operator ${T\kern-1pt\in\BH}$, let $\N(T)$ stand
for the kernel of $T$, which is a closed subspace of $\H$, and let $\R(T)$
stand for the range of $T.$ Let ${T^*\!\in\BH}$ denote the adjoint of
${T\in\BH}.$ Posinormal operators are \hbox{defined as follows}.

\vskip6pt
An operator ${T\kern-1pt\in\BH}$ is {\it posinormal}\/ if
$$
\R(T)\sse\R(T^*)
\qquad\hbox{(which implies $\;\N(T)\sse\N(T^*)\kern.5pt$)},
$$
and {\it quasiposinormal}\/ if
$$
\R(T)^-\!\sse\R(T^*)^-\!
\qquad\hbox{(equivalently, $\,\N(T)\sse\N(T^*)\kern.5pt$)}.
$$
Posinormal operators are quasiposinormal and the concepts coincide if
$\R(T)$ is closed$.$ If $T$ is injective, then it is quasiposinormal; if
$T^*$ is surjective (equivalently, if $T$ is injective with closed range),
then $T$ is posinormal$.$ For equivalent definitions of posinormal operators,
see, e.g., \cite[Theorem 2.1]{R1}, \cite[Theorem B]{I},
\cite[Theorem 1]{JKKP}, \cite[Proposition 1]{KD},
\cite[Definition 1]{KVZ})$.$ An operator ${T\kern-1pt\in\BH}$ is called
{\it coposinormal}\/ or {\it coquasiposinormal}\/ if its adjoint
${T^*\!\in\BH}$ is posinormal or quasiposinormal, respectively.

\vskip6pt
Posinormal operators were introduced and systematically investigated by
Rhaly in \cite{R1}, which appeared in 1994$.$ The class of posinormal
includes the hyponormal operators but is not included in the class of
normaloid operators$.$ For a comprehensive exposition on posinormal
operators see, e.g., \cite{R1} and \cite{KD}$.$ For basic properties of
posinormal operators, see, e.g., \cite[Corollary 2.3]{R1},
\cite[Propositions 3 and 4]{JKKP}, \cite[Lemma 1, Remark 2]{KD}, and
\cite[Proposition 1]{KVZ}. Those properties
required in this paper are summarized below.

\begin{proposition}\label{P:2.1}
{\it Let $T$ be a Hilbert-space operator}\/.
\begin{description}
\item{$\kern-4pt$\rm(a)}
{\it If\/ $T$ is quasiposinormal\/ $($in particular, posinormal\/$)$, then\/
$\N(T)$ reduces}\/ $T$.
\vskip4pt
\item{$\kern-4pt$\rm(b)}
{\it  The restriction of a posinormal\/ $($quasiposinormal\/$)$ operator to
a closed invariant subspace is posinormal\/ $($quasiposinormal\/$)$}\/.
\vskip4pt
\item{$\kern-4pt$\rm(c)}
{\it If\/ $T$ is quasiposinormal\/ $($in particular, posinormal\/$)$,
then}\/ ${\N(T^2)=\N(T)}$.
\end{description}
\end{proposition}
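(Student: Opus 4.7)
All three parts will rest on the equivalent formulation of quasiposinormality noted in the excerpt, namely $\N(T)\sse\N(T^*)$. For (a), I would observe that $\N(T)$ is trivially $T$-invariant (since $T\N(T)=\0$), and the quasiposinormality hypothesis makes $\N(T)$ also $T^*$-invariant for the same reason, via $T^*\N(T)\sse T^*\N(T^*)=\0$. Joint invariance under $T$ and $T^*$ is precisely what it means for $\N(T)$ to reduce $T$, so (a) is a two-line verification.

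For (b), I would fix a closed invariant subspace $\M$, set $S=T|_\M\in\B[\M]$, and compute the adjoint on $\M$ as $S^*=P_\M T^*|_\M$, where $P_\M$ is orthogonal projection onto $\M$; this is a one-line inner-product check. The quasiposinormal case is then immediate: any $y\in\M$ with $Sy=0$ satisfies $Ty=0$, so $T^*y=0$ by hypothesis, whence $S^*y=P_\M T^*y=0$, giving $\N(S)\sse\N(S^*)$. For the posinormal case the cleanest route is Douglas's range/majorization theorem, which rephrases $\R(T)\sse\R(T^*)$ as an operator inequality $TT^*\le\lambda^2 T^*T$ for some $\lambda>0$. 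Restricting to $y\in\M$, the estimate $\|S^*y\|\le\|T^*y\|$ (since $P_\M$ is a contraction) together with the equality $\|Sy\|=\|Ty\|$ transfers the inequality to $SS^*\le\lambda^2 S^*S$, and Douglas applied in reverse yields $\R(S)\sse\R(S^*)$.

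For (c), the inclusion $\N(T)\sse\N(T^2)$ is automatic; for the reverse, if $T^2y=0$ then $Ty\in\N(T)\sse\N(T^*)$, hence $T^*Ty=0$, and then $\|Ty\|^2=\<T^*Ty,y\>=0$ forces $Ty=0$. The only step requiring some care is the posinormal half of (b): a direct attempt to lift $Ty=T^*z$ to a preimage inside $\M$ need not succeed, because the Douglas preimage $z$ need not respect the invariant subspace $\M$. Replacing the range inclusion with the equivalent operator inequality sidesteps this obstruction entirely and is why I would prefer the Douglas formulation throughout.
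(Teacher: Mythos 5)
Your proof is correct, and all three arguments are the standard ones: the paper itself does not prove Proposition 2.1 but defers to the literature (Rhaly, Jeon--Kim--Ko--Park, Kubrusly--Duggal), where the posinormal case of part (b) is handled exactly as you do, via the Douglas-theorem equivalence $\R(T)\sse\R(T^*)\iff TT^*\le\lambda^2T^*T$, which transfers to the restriction because $\|P_\M T^*y\|\le\|T^*y\|$ and $\|Sy\|=\|Ty\|$ for $y\in\M$. Your closing remark correctly identifies why the naive range-lifting argument fails and why the operator-inequality formulation is the right tool.
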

\noindent Regarding  part (a) of the preceding proposition, we note that, in
fact, a Hilbert space operator $T$ is quasiposinormal if and only if $\N(T)$
reduces $T$:
$$
\hbox{$T$ is quasiposinormal $\iff\N(T)\subseteq\N(T^*)\iff\N(T)$ reduces
$T$.}
$$

The next lemma facilitates our exploration of properties of matrix
representations of commuting posinormal operators.
  
\begin{lemma}\label{L:2.2}
Let $B$ be an operator with closed range on $\H$. Suppose that with respect
to an orthogonal decomposition $\H=\H_1\oplus \H_2$,
$$
B=\Big(\smallmatrix{B' & O \cr
                    Y  & Z \cr}\Big).
$$
\vskip-4pt\noi
Then
\begin{itemize}
\item[(a)]
$\R({B'}^{*})^{-}\subseteq \R{B'}^{*})+\R(Y^{*}|_{\N(Z^{*})}).$
As a result, if $\R(Y^{*}|_{\N(Z^{*})})\subseteq\R({B'}^{*})$, then
$\R({B'}^{*})$ is closed and hence, $\R(B')$ is closed$.$
\item[(b)]
If $B$ is also assumed to be posinormal, then
$$
\R(Z)^{-}\subseteq\R(Z^{*}).
$$
As a result, if $Z^{*}$ is quasiposinormal, then
$\R(Z^{*})^{-}\subseteq\R(Z)^{-}$ and hence $\R(Z^{*})=\R(Z)^{-}$, which
implies that $\R(Z^{*})$ is closed.
\end{itemize}
\end{lemma}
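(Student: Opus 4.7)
The strategy is to exploit the fact that since $B$ has closed range, $\R(B^{*})$ is also closed (a standard consequence of the closed range theorem), and then read off information about the block entries $B'$, $Y$, and $Z$ from the matrix representation of $B^{*}$, which is
$$
B^{*}=\Big(\smallmatrix{(B')^{*} & Y^{*} \cr
                         O       & Z^{*}  \cr}\Big).
$$

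For part (a), the plan is as follows. Given $x\in\R((B')^{*})^{-}$, pick a sequence $(k_{n})\subseteq\H_{1}$ with $(B')^{*}k_{n}\to x$. Observe that
$(B')^{*}k_{n}\oplus 0=B^{*}(k_{n}\oplus 0)$, so the sequence $\{(B')^{*}k_{n}\oplus 0\}$ lies in the closed set $\R(B^{*})$ and converges to $x\oplus 0$. Hence $x\oplus 0\in\R(B^{*})$, which by the matrix form of $B^{*}$ means there exist $k_{1}\in\H_{1}$ and $k_{2}\in\H_{2}$ with $(B')^{*}k_{1}+Y^{*}k_{2}=x$ and $Z^{*}k_{2}=0$. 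The second equation says $k_{2}\in\N(Z^{*})$, so $Y^{*}k_{2}\in\R(Y^{*}|_{\N(Z^{*})})$, giving the claimed inclusion. For the consequence, if $\R(Y^{*}|_{\N(Z^{*})})\subseteq\R((B')^{*})$, then the inclusion becomes $\R((B')^{*})^{-}\subseteq\R((B')^{*})$, so $\R((B')^{*})$ is closed; since the range of an operator is closed iff the range of its adjoint is closed, $\R(B')$ is closed as well.

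For part (b), assume additionally that $B$ is posinormal, so $\R(B)\subseteq\R(B^{*})$. Let $z\in\R(Z)^{-}$ and choose $(h_{n})\subseteq\H_{2}$ with $Zh_{n}\to z$. Then $0\oplus Zh_{n}=B(0\oplus h_{n})\in\R(B)\subseteq\R(B^{*})$, and because $\R(B^{*})$ is closed, $0\oplus z\in\R(B^{*})$. Reading off the matrix of $B^{*}$ yields $k_{2}\in\H_{2}$ with $Z^{*}k_{2}=z$, establishing $\R(Z)^{-}\subseteq\R(Z^{*})$. For the consequence, if $Z^{*}$ is quasiposinormal, then $\R(Z^{*})^{-}\subseteq\R(Z^{**})^{-}=\R(Z)^{-}$, which combined with the previous inclusion yields
$$
\R(Z^{*})^{-}\subseteq\R(Z)^{-}\subseteq\R(Z^{*})\subseteq\R(Z^{*})^{-},
$$
forcing equality throughout. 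In particular $\R(Z^{*})=\R(Z)^{-}$ is closed.

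The argument is largely a bookkeeping exercise; there is no genuine obstacle. The only substantive step is the observation that the matrix structure of $B$ allows one to embed the ranges of the block entries into $\R(B)$ or $\R(B^{*})$ in a controlled way, after which closedness of $\R(B^{*})$ (and, in part (b), posinormality of $B$) does all the work.
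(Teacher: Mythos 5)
Your proposal is correct and follows essentially the same route as the paper's proof: in both parts you embed the relevant block-range elements as $x\oplus 0$ (resp.\ $0\oplus z$) into $\R(B^*)$, use closedness of $\R(B^*)$ (and, in (b), posinormality of $B$), and read off the conclusion from the upper-triangular matrix form of $B^*$. The deductions of the two ``as a result'' consequences are also the intended ones, so there is nothing to add.
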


\begin{proof}
(a) Let $x\in\R({B'}^{*})^{-}$ and let $(x_n)$ be a sequence in
$\R({B'}^{*})$ that converges to $x.$ For each $n$, there exists
$u_n\in\H_1$ such that ${x_n={B'}^*u_n}$ and we have
${B^*(u_n,0)=({B'}^*u_n,0)=(x_n,0)}.$ So ${(x_n,0)\in\R(B^*)}$ for each $n$
and $(x_n,0)\to(x,0).$ Since $\R(B^*)$ is closed (because $\R(B)$ is closed)
it follows that ${(x,0)\in\R(B^*)}.$ Thus there exists
$(u,v)\in\H_1\oplus\H_2$ such that
$$
(x,0)=B^*(u,v)=({B'}^*u+Y^*v, Z^*v).
$$
This implies $Z^{*}v=0$, which shows that $Y^{*}v\in\R(Y^{*}|_{\N(Z^{*})}).$
Since $x={B'}^{*}u+Y^{*}v$, we conclude that
$$
x\in\R({B'}^{*})+\R(Y^{*}|_{\N(Z^{*})}).
$$
(b) Now assume that $B$ is posinormal$.$ Let $y\in\R(Z)^{-}.$ Then
$(0,y)\in\R(B)^{-}= R(B)\subseteq\R(B^{*})$ because $B$ is posinormal with
closed range$.$ Then there exists $(u,v)\in\H_1\oplus\H_2$ such that
$$
(0,y)=B^{*}(u,v)=({B'}^*u+Y^*v,Z^*v).
$$
This shows that $y\in\R(Z^{*})$.
\end{proof}

\vskip1pt
Let $A$ and $B$ be operators on a Hilbert space $\H.$ According to
Proposition \ref{P:2.1}(a), if $A$ is quasiposinormal (or posinormal), then
$\N(A)$ reduces $A$.

\begin{proposition}\label{P:2.2}
{\it Suppose that $A$ and\/ $B$ commute and that $\N(A)$ reduces $A$.
\vskip6pt\noi
{\rm(a)}$\,$
With respect to the decomposition\/
$\H=\N(A)^\perp\oplus\N(A)$,
$$
A=\big(\smallmatrix{A' & O \cr
                    O  & O \cr}\big)
\quad\hbox{\it and}\quad
B=\big(\smallmatrix{B' & O \cr
                    Y  & Z \cr}\big)
\quad\hbox{\it with}\quad
Y\!A'=O,
$$
and\/ ${Y=O}$ if and only if\/ $\N(A)$ reduces\/ $B$.
\vskip6pt\noi
{\rm(b)}
If\/ $\R(A)$ and $\R(B)$ are closed and
$\R(Y^*|_{\N(Z^*)})\!\sse\!\R({B'}^*)$, then
\hbox{$\kern-.5pt\R(AB)\kern-1pt$ is closed}}\/.
\end{proposition}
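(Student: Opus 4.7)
\medskip
\noindent\textbf{Proof plan.}

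\emph{Part (a).} The block form of $A$ is immediate from the reducing property: since $A|_{\N(A)} = O$ and $\N(A)$ reduces $A$, the operator $A' := A|_{\N(A)^\perp}$ maps $\N(A)^\perp$ into itself, giving the stated diagonal form. Writing $B$ in general $2\times 2$ block form $B=\big(\smallmatrix{B' & X \cr Y & Z\cr}\big)$ and computing $AB$ and $BA$ from the block products, the commutation relation $AB=BA$ yields four entrywise identities; the $(1,1)$ entry says $A'B' = B'A'$, the $(2,1)$ entry says $YA'=O$ (this is what we want), and the $(1,2)$ entry says $A'X=O$. Because $A'=A|_{\N(A)^\perp}$ is injective (its kernel is $\N(A)\cap \N(A)^\perp=\{0\}$), the identity $A'X=O$ forces $X=O$, which gives the claimed block form for $B$. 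Finally, the assertion about $Y$: the block form already shows $B(\N(A))\subseteq\N(A)$ (the image of any $(0,v)$ under $B$ has zero first coordinate), so $\N(A)$ is always $B$-invariant, and $\N(A)^\perp$ is $B$-invariant if and only if $Y u=0$ for every $u\in\N(A)^\perp$, i.e., $Y=O$.

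\emph{Part (b).} From part (a) we compute
$$
AB=\big(\smallmatrix{A' & O \cr O & O\cr}\big)\big(\smallmatrix{B' & O \cr Y & Z\cr}\big)=\big(\smallmatrix{A'B' & O \cr O & O\cr}\big),
$$
so $\R(AB)=\R(A'B')\oplus\{0\}$ as a subset of $\N(A)^\perp\oplus\N(A)$. Hence it suffices to show that $\R(A'B')$ is closed in $\N(A)^\perp$. I will obtain this from two facts:

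\smallskip
\noindent\emph{(i) $A'$ is bounded below.} Since $\R(A)$ is closed and $A=\big(\smallmatrix{A' & O\cr O & O\cr}\big)$, the subspace $\R(A')$ is closed in $\N(A)^\perp$; combined with the injectivity of $A'$ noted in part (a), the open mapping (inverse mapping) theorem gives a constant $c>0$ with $\|A'x\|\ge c\|x\|$ for every $x\in\N(A)^\perp$.

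\smallskip
\noindent\emph{(ii) $\R(B')$ is closed.} This is exactly the conclusion of Lemma~\ref{L:2.2}(a) under the hypothesis $\R(Y^*|_{\N(Z^*)})\subseteq \R({B'}^{*})$: that lemma gives $\R({B'}^{*})$ closed, whence $\R(B')$ is closed.

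\smallskip

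\noindent Combining (i) and (ii): $\R(A'B')=A'(\R(B'))$, and $A'$ restricted to the closed subspace $\R(B')$ remains bounded below, so it maps this closed subspace to a closed subspace. Concretely, if $A'B' x_n\to w$, then $A'y_n\to w$ with $y_n:=B'x_n\in\R(B')$, and $\|y_n-y_m\|\le c^{-1}\|A'(y_n-y_m)\|$ shows $(y_n)$ is Cauchy, converging to some $y\in\R(B')^-=\R(B')$, and then $w=A'y\in A'(\R(B'))=\R(A'B')$. Thus $\R(A'B')$ is closed in $\N(A)^\perp$, and therefore $\R(AB)$ is closed in $\H$.

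\smallskip
\noindent I do not anticipate a substantive obstacle: the only nonroutine ingredient is the closedness of $\R(B')$, which is handed to us by Lemma~\ref{L:2.2}(a); everything else is the standard observation that an injective operator with closed range is bounded below and hence sends closed subspaces to closed subspaces.
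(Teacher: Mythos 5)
Your proposal is correct and follows essentially the same route as the paper: for (a) the same block computation of $AB=BA$ forcing $A'X=O$ and $YA'=O$ with injectivity of $A'$ killing $X$, and for (b) the same combination of Lemma~\ref{L:2.2}(a) (closedness of $\R(B')$), the bounded-below property of $A'$, and the Cauchy-sequence argument showing $\R(A'B')$ is closed. No gaps.
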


\begin{proof}
(a)
Consider the decomposition ${\H\kern-1pt=\kern-1pt\N(A)^\perp\!\oplus\N(A)}.$
Since $\N(A)$ \hbox{reduces $A$,}
$$
A=\big(\smallmatrix{A' & O \cr
                    O  & O \cr}\big)=A'\oplus\,O,
\quad
B=\big(\smallmatrix{B' & X \cr
                    Y  & Z \cr}\big),
\quad
BA=\big(\smallmatrix{B'\!A' & O \cr
                     Y'\!A  & O \cr}\big),
\quad
AB=\big(\smallmatrix{A'B' & A'X \cr
                     O   & O    \cr}\big)
$$ 
with $A'\!=A|_{\N(A)^\perp}$ and $B'$ in $\B[\N(A)^\perp].$ Thus if $A$
and $B$ commute, then
$$
AB=BA=\big(\smallmatrix{B'\!A & O \cr
                        O     & O \cr}\big)
=\big(\smallmatrix{A'B' & O \cr
                   O    & O \cr}\big)
=B'\!A'\oplus\,O=A'B'\oplus\,O,
$$
with $A'X=O$ and $Y\!A'=O.$ Since $A'$ is injective and ${A'X=O}$ we get
${X=O}.$ So
$$
B=\big(\smallmatrix{B' & O \cr
                    Y  & Z \cr}\big)
\quad\hbox{and so}\quad
B^*=\big(\smallmatrix{{B'}^* & Y^* \cr
                      O      & Z^* \cr}\big),
$$
and hence\/ ${Y=O}$ if and only if\/ $\N(A)$ also reduces\/ $B$.

\vskip6pt\noi
(b)
Suppose $\R(B)$ is closed in $\H$ and $\R(Y^*|_{\N(Z^*)})\sse\R({B'}^*).$
By Lemma \ref{L:2.2}(a), we conclude that $\R(B')$ is closed in
$\N(A)^{\perp}$. 

\vskip6pt\noi
Next suppose $\R(A)$ is closed in $\H$$.$ Since $\R(A)=\R(A')\oplus\0$ and
$\R(A)$ is closed,
$$
\hbox{$\R(A')$ is closed in $\N(A)^\perp$}.
$$
Then $A'$ is injective with closed range, which means it is bounded below
--- there is a positive constant $c$ such that $\|A'x\|\ge c\|x\|$ for all
$x\in\N(A)^\perp$.

\vskip6pt\noi
Now let $(x_n)$ be an arbitrary convergent sequence in $\R(A'B').$ Then,
for each $n$, ${x_n=A'B'u_n}$ for some ${u_n\in \N(A)^\perp}.$ Because
$(x_n)$ is convergent $(A'B'u_n)$ is Cauchy, which implies, because $A'$ is
bounded below, that $(B'u_n)$ is Cauchy$.$ Thus $(B'u_n)$ converges and,
because $\R(B')$ is closed, there is a $u\in\R(B')$ such that
${\lim (B'u_n)=B'u}.$ We have $A'B'u=\lim(A'B'u_n)=\lim (x_n)$ and we see
that $\R(A'B')$ is closed,
as desired.
\end{proof}

\section{Closed-Range Posinormal Operators}

In this section, we identify three properties of Hilbert-space operators,
each one of which normal operators possess, such that if $T$ is an operator
on a Hilbert space $\H$ having these three properties, then $T$ has closed
range if and only if $0$ is not a limit point of the spectrum of $T$.

\vskip6pt
Let $T$ be a bounded linear operator on a complex Hilbert space, let
$\sigma(T)$ and $\rho(T)={\CC\backslash\sigma(T)}$ denote the spectrum and
the resolvent set of $T$, respectively, and consider the standard partition
of the spectrum into $\sigma_{\kern-1ptP}(T)$, the point spectrum,
$\sigma_{\kern-1ptR}(T)$, the residual spectrum, and
$\sigma_{\kern-.5ptC}(T)$, the continuous spectrum. An operator $T$ is
{\it isoloid}\/ is every isolated point of the spectrum $\sigma(T)$ is an
eigenvalue$.$  In particular, if the spectrum $\sigma(T)$ is a
singleton $\{\lambda\}$, then $T$ is isoloid if and only if
${\sigma_{\kern-1ptP}(T)=\{\lambda\}}.$ Vacuously, every operator whose
spectrum has no isolated \hbox{point is isoloid}$.$ There exist posinormal
operators that are not isoloid, e.g., an injective weighted unilateral shift
$T_+$ on $\ell_+^2$ with weight sequence $\left(\frac{1}{k}\right)$ is a
(compact, quasinilpotent) posinormal operator (cf$.$ \cite[Section 3]{KD})
whose spectrum $\sigma(T_+)=\{0\}$ coincides with the residual spectrum
$\sigma_{\kern-1ptR}(T_+)\kern.5pt$.

\vskip6pt
{\it A normal operator has closed range if and only if zero is not a limit
point of its spectrum}\/$.$ This is a particular case of
\cite[Proposition XI.4.5]{CFA}, whose proof is based on the Spectral Theorem
as well as the Open Mapping Theorem$.$ The preceding characterization of
closed range normal operators doesn't extend to posinormal operators; in
fact, it doesn't extend to hyponormal operators$.$ For example, the forward
shift operator on $\ell^2$ is hyponormal with closed range but its spectrum
is the entire closed unit disk (so that $0$ is a limit point of the
spectrum)$.$ We seek additional conditions that ensure a posinormal operator
has closed range if and if $0$ is not a limit point of its spectrum.

\vskip6pt
If $T$ is a posinormal operator (or even a quasiposinormal operator), we
have $\N(T)\subseteq\N(T^*)$, which ensures that $\N(T)$ reduces $T.$
uppose that for some $T\in \BH$, we know $\N(T)$ reduces $T.$ Then, we have 
$$
\quad \quad T=T'\oplus 0
\quad\hbox{on}\quad
\H=\N(T)^\perp\oplus\N(T),
$$
where ${T'=T|_{\N(T)^\perp}\!\in\B[\N(T)^\perp\kern-1pt]}.$ Assuming that
$T$ has the representation $T=T'\oplus 0$ above, we will show that if we
want the condition ``$T$ has closed range'' to imply $0$ is not a limit
point of the spectrum of $T$, then it's sufficient to assume that $0$ does
not belong to the residual spectrum of $T'$, i.e., $0\not\in \sigma_R(T').$
We will also show that if we want the condition ``$0$ is not a limit point
of $\sigma(T)$'' to imply $\R(T)$ is closed, then it's sufficient to assume
that $T'$ is isoloid.
 
\vskip6pt
If $T$ is a normal operator, observe that
\vskip2pt
\begin{itemize}
\item[(i)]  $\N(T)$ reduces $T$,
\vskip2pt
\item[(ii)] $0\not\in \sigma_R\left(T|_{\N(T)^\perp}\right)$,
\vskip2pt
\item[(iii)] $T|_{\N(T)^\perp}$ is isoloid.
\end{itemize}
\vskip2pt
As for property (i), not only does $\N(T)$ reduce $T$ when $T$ is normal, we
have $\N(T)=\N(T^*).$ To see that normal operators satisfy (ii), let $T$ be
normal and observe $T|_{\N(T)^\perp}$ is normal because $\N(T)^\perp$
reduces $T$; thus, $0$ is either an eigenvalue of $T|_{\N(T)^\perp}$ (so
that $0\not\in \sigma_R\left(T|_{\N(T)^\perp}\right)$) or fails to be
eigenvalue of both $T|_{\N(T)^\perp}$ and its adjoint, and $0$'s failing to
be an eigenvalue of $T|_{\N(T)^\perp}$ means $T|_{\N(T)^\perp}$ has dense
range  (so that  $0\not\in \sigma_R\left(T|_{\N(T)^\perp}\right)).$ We have
already noted that if $T$ is normal, then $T|_{\N(T)^\perp}$ is also normal
and because all normal operators are isoloid, we see $T|_{\N(T)^\perp}$ is
isoloid; i.e., (iii) holds.  That normal operators are isoloid is a
consequence of the Spectral Theorem, and we note that with the help of the
Riesz Decomposition Theorem isoloidness can be extended to hyponormal
operators \cite[Theorem 2]{S}$.$ We say a Hilbert-space operator $T$ is of
{\it  class} $\NL$ (``normal like'') provided $T$ satisfies conditions
(i)--(iii) above.
 
\vskip6pt
We have already pointed out that property (i) of $\NL$ operators is
satisfied by any quasiposinormal operator (and hence by any posinormal and
hyponormal operator)$.$ Also, hyponormal operators satisfy property (iii) of
$\NL$ operators (because the restriction of a hyponormal operator to a
reducing subspace is hyponormal and, as we noted in the preceding paragraph,
hyponormal operators are isoloid).

\begin{theorem}\label{T:3.1}
{\it An operator of class\/ $\NL$ has closed range if and only if zero is
not a limit point of its spectrum}\/.
\end{theorem}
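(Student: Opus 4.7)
The plan is to use property (i) to decompose $T = T' \oplus 0$ with respect to the orthogonal decomposition $\H = \N(T)^\perp \oplus \N(T)$, where $T' = T|_{\N(T)^\perp} \in \B[\N(T)^\perp]$ is, by construction, injective. Under this decomposition $\R(T) = \R(T') \oplus \0$, so $\R(T)$ is closed in $\H$ if and only if $\R(T')$ is closed in $\N(T)^\perp$; likewise $\sigma(T) = \sigma(T') \cup \{0\}$, so $0$ is a limit point of $\sigma(T)$ if and only if $0$ is a limit point of $\sigma(T')$. The problem thus reduces to proving the stated equivalence for the injective operator $T'$ on $\N(T)^\perp$.

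For the forward direction, suppose $\R(T')$ is closed. Since $T'$ is injective, property (ii), which asserts $0 \notin \sigma_R(T')$, is equivalent to $\R(T')$ being dense in $\N(T)^\perp$. A closed dense subspace of $\N(T)^\perp$ must equal $\N(T)^\perp$, so $T'$ is a bijection of $\N(T)^\perp$ onto itself and therefore invertible by the bounded inverse theorem. Hence $0 \in \rho(T')$, and openness of the resolvent set places a whole neighborhood of $0$ in $\rho(T')$; in particular $0$ is not a limit point of $\sigma(T')$.

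For the reverse direction, suppose $0$ is not a limit point of $\sigma(T')$. Then either $0 \in \rho(T')$, in which case $T'$ is invertible and $\R(T') = \N(T)^\perp$ is trivially closed, or $0$ is an isolated point of $\sigma(T')$. In the latter case, property (iii) forces $0$ to be an eigenvalue of $T'$, contradicting the injectivity of $T'$. Thus only the first alternative can occur, and $\R(T')$, hence $\R(T)$, is closed. The substantive step is the forward direction, where closed range combined with property (ii) upgrades the injective operator $T'$ to an invertible one; this is precisely what distinguishes class $\NL$ operators from pathologies such as the unilateral shift, which is injective with closed range yet has $0 \in \sigma_R$, violating (ii).
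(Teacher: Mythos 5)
Your proof is correct and follows essentially the same route as the paper's: decompose $T=T'\oplus 0$ via property (i), use (ii) plus closed range to upgrade the injective $T'$ to an invertible operator for the forward direction, and use (iii) plus injectivity to rule out $0$ as an isolated spectral point for the converse. (The identity $\sigma(T)=\sigma(T')\cup\{0\}$ is off in the degenerate case $\N(T)=\{0\}$, but the only consequence you use --- that $0$ is a limit point of $\sigma(T)$ iff it is a limit point of $\sigma(T')$ --- holds in all cases.)
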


\begin{proof}
Suppose ${T\kern-1pt\in\B[\H]}$ is a closed-range operator on $\H$ of class
$\NL.$ Because $T$ is of class $\NL$, we know (i) $\N(T)$ reduces $T$ and
(ii) $0\not\in \sigma_{\kern-1ptR}(T|_{\N(T)^\perp}).$ Because $\N(T)$
reduces $T$, we have the decomposition
$$
T=T'\oplus O
\quad\hbox{on}\quad
\H=\N(T)^\perp\oplus\N(T),
$$
where ${T'=T|_{\N(T)^\perp}\!\in\B[\N(T)^\perp\kern-1pt]}$, so that
$\N(T')=\0$ and $\R(T')$ is closed because $\R(T)$ is closed$.$ Also observe
that the representation $T=T'\oplus O$ on $\H={\N(T)^\perp\oplus\N(T)}$ shows
that for $\lambda\ne0$, the operator $T-\lambda I$ is invertible on $\H$ if
and only if $T'-\lambda I'$ is invertible on $\N(T)^\perp$, where $I$ is the
identity on $\H$ and $I'$ is the identity on $\N(T)^\perp$.

\vskip6pt\noi
Because $0$ is not an eigenvalue of $T'$ and
$0\not\in\sigma_{\kern-1ptR}(T|_{\N(T)^\perp})$, the range of $T'$ must be
dense in $\N(T)^\perp.$ But the range of $T'$ is closed, so that $T'$ is
surjective$.$ Hence, $T'$ is invertible; that is, $0\in \rho(T').$ Because
$\rho(T')$ is open, there is an $\epsilon>0$ such that $T'-\lambda I'$ is
invertible on $\N(T)^\perp$ whenever $|\lambda| < \epsilon.$  Thus,
$T-\lambda I$ is invertible whenever $0< |\lambda|<\epsilon$ and we see $0$
is not a limit point of the spectrum of $T$.

\vskip6pt\noi
Conversely, suppose that $0$ is not a limit point of the spectrum of $T$
where $T$ is of class $\NL.$ In particular, we know that (i) $\N(T)$ reduces
$T$ and (iii) $T|_{N(T)^\perp}$ is isoloid. As we discussed in the first
paragraph of the proof, because (i) holds, we have the representation
$T=T'\oplus O$ on $\H=\N(T)^\perp\oplus\N(T)$, where $\N(T')=\{0\}$ and
for nonzero $\lambda$,  the operator $T-\lambda I$ is invertible on $\H$ if
and only if $T'-\lambda I'$ is invertible on $\N(T)^\perp.$ Because $0$ is
not a limit point of the spectrum of $T$, we see that $0$ is not a limit
point of the spectrum of $T'.$ Thus $0$ is either not in the spectrum of
$T'$ or it's an isolated point of the spectrum; however, the latter is not a
possibility --- because $T|_{N(T)^\perp}$ is isoloid, if $0$ were an
isolated spectral point, then it would be an eigenvalue but we know
$\N(T')=\{0\}.$ Thus $T'$ is invertible and it follows that
$\R(T)=\N(T)^\perp$ is closed.
\end{proof}

\vskip2pt
The class $\NL$ is constructed so that the following holds.

\begin{corollary}\label{C:3.2}
{\it If $T$ is a posinormal operator on $\H$ such that $T|_{\N(T)^\perp}$ is
a isoloid operator whose residual spectrum does not contain $0$, then $\R(T)$
is closed if and only if $0$ is not a limit point of $\sigma(T)$.}
\end{corollary}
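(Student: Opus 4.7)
The plan is to observe that the hypotheses of the corollary are exactly tailored so that $T$ satisfies the three defining conditions of the class $\NL$, and then simply invoke Theorem~\ref{T:3.1}.

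More precisely, I would verify the three conditions (i), (ii), (iii) for $T$ one by one. Condition (i), that $\N(T)$ reduces $T$, is immediate from the fact that $T$ is posinormal: posinormality gives $\R(T) \subseteq \R(T^*)$, hence $\N(T^*) \subseteq \N(T)$ by taking orthogonal complements, and combined with the reverse inclusion $\N(T) \subseteq \N(T^*)$ noted in the definition of posinormality (or equivalently by Proposition~\ref{P:2.1}(a)), we obtain $\N(T) = \N(T^*)$, so $\N(T)$ reduces $T$. Conditions (ii) and (iii) are literally the hypotheses on $T|_{\N(T)^\perp}$: namely $0 \notin \sigma_R(T|_{\N(T)^\perp})$ and $T|_{\N(T)^\perp}$ is isoloid.

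Once these three conditions are verified, Theorem~\ref{T:3.1} applies directly: $T$ is of class $\NL$, and therefore $\R(T)$ is closed if and only if $0$ is not a limit point of $\sigma(T)$.

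There is essentially no obstacle in this proof; the corollary is designed as an immediate consequence of Theorem~\ref{T:3.1}, and the only substantive observation is that posinormality supplies condition (i) for free, while the other two $\NL$ conditions are built into the statement of the corollary. The one minor stylistic choice would be whether to cite Proposition~\ref{P:2.1}(a) explicitly for (i) or to derive it directly from the definition in one line.
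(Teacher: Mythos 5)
Your overall approach is exactly the paper's: the corollary is stated there without proof, as an immediate consequence of Theorem \ref{T:3.1}, and your verification that the hypotheses supply conditions (i)--(iii) of class $\NL$ is precisely the intended argument. One correction, though: your justification of condition (i) contains a wrong step. Taking orthogonal complements in $\R(T)\subseteq\R(T^*)$ \emph{reverses} the inclusion and yields $\N(T)\subseteq\N(T^*)$, not $\N(T^*)\subseteq\N(T)$, so you cannot conclude $\N(T)=\N(T^*)$; that equality fails for general posinormal operators (the forward shift on $\ell^2$ is hyponormal, hence posinormal, with trivial kernel but nontrivial kernel of the adjoint). The slip is harmless here: the single inclusion $\N(T)\subseteq\N(T^*)$ already makes $\N(T)$ invariant under both $T$ and $T^*$, hence reducing for $T$, which is all condition (i) requires --- this is Proposition \ref{P:2.1}(a) and the displayed equivalence following it. With that one-line repair your proof is complete and coincides with the paper's.
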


\vskip2pt
Let $T$ be hyponormal, then for every  $\lambda \in \CC$, the operator
$T-\lambda I$ is hyponormal.  By Proposition \ref{P:2.1}(a) we know
$\N(T-\lambda I)$ reduces $T-\lambda I$ and  by \cite[Theorem 2]{S}, we know
$T-\lambda I$ is isoloid. Thus $T-\lambda I$ satisfies (i) and (iii) of
class $\NL.$ Hence, the last paragraph of the proof of Theorem \ref{T:3.1}
yields the following.

\begin{corollary}\label{C:3.3}
{\it If $T$ is a hyponormal operator on $\H$, then whenever $\lambda$ is not
a limit point of $\sigma(T)$ the range of $T-\lambda I$ is closed.}
\end{corollary}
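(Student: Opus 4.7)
The plan is to apply the converse direction of Theorem \ref{T:3.1} to the operator $T - \lambda I$. Recall that in that theorem, only properties (i) and (iii) of class $\NL$ are used for the implication ``$0$ is not a limit point of $\sigma(T)$ $\limply$ $\R(T)$ is closed''; property (ii) is needed only for the reverse implication. So it suffices to verify (i) and (iii) for $T - \lambda I$, note that $0$ is not a limit point of $\sigma(T - \lambda I) = \sigma(T) - \lambda$, and conclude.

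First I would observe that $T - \lambda I$ is hyponormal whenever $T$ is, which is standard (and is stated in the discussion immediately preceding the corollary). Then property (i) for $T - \lambda I$ follows because hyponormal operators are posinormal, hence quasiposinormal, and Proposition~\ref{P:2.1}(a) gives that $\N(T - \lambda I)$ reduces $T - \lambda I$. For property (iii), I would use that $\N(T - \lambda I)^\perp$ is a reducing subspace for $T - \lambda I$; by Proposition~\ref{P:2.1}(b) (applied to hyponormal operators, or directly from the fact that restrictions of hyponormal operators to reducing subspaces are hyponormal), the restriction $(T - \lambda I)|_{\N(T - \lambda I)^\perp}$ is hyponormal, and every hyponormal operator is isoloid by \cite[Theorem 2]{S}.

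With (i) and (iii) verified for $T - \lambda I$, and with $0$ not a limit point of $\sigma(T - \lambda I)$ by hypothesis, the argument in the last paragraph of the proof of Theorem~\ref{T:3.1} applies verbatim to $T - \lambda I$: one decomposes $T - \lambda I = (T - \lambda I)' \oplus O$ on $\H = \N(T - \lambda I)^\perp \oplus \N(T - \lambda I)$, observes that $0$ cannot be an isolated point of $\sigma((T - \lambda I)')$ because that operator is isoloid while $\N((T - \lambda I)') = \{0\}$, deduces $0 \in \rho((T - \lambda I)')$, and concludes that $\R(T - \lambda I) = \N(T - \lambda I)^\perp$ is closed.

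There is no real obstacle: the content is already in the proof of Theorem~\ref{T:3.1}, and the only point requiring any care is making sure that we really need only (i) and (iii) — not (ii) — for this direction, so that the hypothesis of hyponormality (which yields (i) and (iii) for every translate $T - \lambda I$) is enough even though hyponormality by itself does not give (ii).
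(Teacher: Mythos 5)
Your proof is correct and follows essentially the same route as the paper: the paper likewise notes that $T-\lambda I$ is hyponormal, verifies properties (i) and (iii) of class $\NL$ for it (via Proposition~\ref{P:2.1}(a) and \cite[Theorem 2]{S}), and invokes only the last paragraph of the proof of Theorem~\ref{T:3.1}, exactly as you do. Your explicit remark that property (ii) is not needed for this direction, and your care in checking isoloidness of the restriction rather than of $T-\lambda I$ itself, match the paper's earlier discussion of why hyponormal operators satisfy (iii).
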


Similarly, the hypotheses of the next corollary imply that $T$ satisfies (i)
and (iii) of class $\NL$.

\begin{corollary}\label{C:3.4}
{\it If $T$ is a posinormal  or quasiposinormal operator such  that $0$ is
not a limit point of  $\sigma(T)$ and $T|_{\N(T)^\perp}$ is isoloid, then
$\R(T)$ is closed.}
\end{corollary}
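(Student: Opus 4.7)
The plan is to obtain Corollary \ref{C:3.4} as an immediate consequence of the second (``converse'') half of the proof of Theorem \ref{T:3.1}, by verifying that the hypotheses yield conditions (i) and (iii) of class $\NL$ — the only two conditions actually used in that half of the argument. Condition (ii) of class $\NL$ is not needed for the implication ``$0$ not a limit point of $\sigma(T)$ $\Rightarrow$ $\R(T)$ closed,'' so we need not invoke it here.

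First I would note that by Proposition \ref{P:2.1}(a), since $T$ is posinormal (or more generally quasiposinormal), the kernel $\N(T)$ reduces $T$, establishing property (i) of class $\NL$. The assumption that $T|_{\N(T)^\perp}$ is isoloid is precisely property (iii). Thus, writing $T' = T|_{\N(T)^\perp}$, we have the orthogonal decomposition
$$
T = T' \oplus O \quad\text{on}\quad \H = \N(T)^\perp \oplus \N(T),
$$
with $T'$ injective and isoloid.

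Next, exactly as in the last paragraph of the proof of Theorem \ref{T:3.1}, I would observe that for nonzero $\lambda$, the operator $T - \lambda I$ is invertible on $\H$ if and only if $T' - \lambda I'$ is invertible on $\N(T)^\perp$. Consequently, $0$ is not a limit point of $\sigma(T')$, so either $0 \in \rho(T')$ or $0$ is an isolated point of $\sigma(T')$. The latter is ruled out by the isoloid hypothesis on $T'$ together with injectivity of $T'$: an isolated spectral point of $T'$ would be an eigenvalue, contradicting $\N(T') = \{0\}$. Therefore $T'$ is invertible, and $\R(T) = \R(T') \oplus \{0\} = \N(T)^\perp$ is closed.

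There is essentially no obstacle here: the only subtlety is the observation that the converse direction of Theorem \ref{T:3.1} uses only conditions (i) and (iii), not (ii). This matches the remark in the paper immediately preceding the corollary (``the hypotheses of the next corollary imply that $T$ satisfies (i) and (iii) of class $\NL$''), so the proof reduces to invoking that half of Theorem \ref{T:3.1}'s argument verbatim.
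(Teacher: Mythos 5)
Your proposal is correct and follows the paper's own route exactly: the paper likewise notes that posinormality (via Proposition \ref{P:2.1}(a)) gives condition (i), the isoloid hypothesis gives condition (iii), and that the converse half of the proof of Theorem \ref{T:3.1} uses only these two conditions. Nothing is missing.
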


We now characterize when a posinormal operator with closed range satisfies
condition (ii) of class $\NL$. 

\begin{proposition}\label{P:3.5}
{\it Let $T$ be a posinormal operator on $\H$ having closed range$.$ The
following are equivalent:
\begin{itemize}
\item[(a)]
$0\not\in \sigma_R\left(T|_{\N(T)^\perp}\right)$;
\vskip4pt
\item[(b)]
$T|_{\N(T)^\perp}$ is invertible;
\vskip4pt
\item[(c)]
$T$ is coposinormal;
\vskip4pt
\item[(d)]
0 is not a limit point of $\sigma(T)$.
\end{itemize}}
\end{proposition}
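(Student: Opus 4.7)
The plan is to fix the reducing decomposition $T = T' \oplus 0$ on $\H = \N(T)^\perp \oplus \N(T)$ afforded by Proposition~\ref{P:2.1}(a), where $T' := T|_{\N(T)^\perp}$. Since $\R(T) = \R(T') \oplus \0$ is closed, the restriction $T'$ has closed range; being also injective, $T'$ is bounded below, and $T^* = (T')^* \oplus 0$ with respect to the same decomposition. I will close the cycle (a) $\Leftrightarrow$ (b) $\Leftrightarrow$ (c) and separately establish (b) $\Leftrightarrow$ (d).

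The equivalence (a) $\Leftrightarrow$ (b) is immediate: since $T'$ is injective, the condition $0 \notin \sigma_R(T')$ is equivalent to $\R(T')$ being dense in $\N(T)^\perp$, and because $\R(T')$ is already closed, this in turn says $\R(T') = \N(T)^\perp$, i.e., $T'$ is invertible.

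For (b) $\Rightarrow$ (c), invertibility of $T'$ yields invertibility of $(T')^*$, whence $\R(T^*) = \N(T)^\perp \oplus \0 = \R(T)$; in particular $\R(T^*) \subseteq \R(T)$, so $T$ is coposinormal. For (c) $\Rightarrow$ (b), posinormality together with coposinormality forces $\R(T) = \R(T^*)$; since both ranges are closed, taking orthogonal complements gives $\N(T) = \N(T^*)$, and therefore $\R(T) = \N(T^*)^\perp = \N(T)^\perp$. Thus $T'$ is surjective on $\N(T)^\perp$, and combined with injectivity it is invertible.

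For (b) $\Rightarrow$ (d), openness of $\rho(T')$ provides a neighborhood of $0$ on which $T' - \lambda I'$ is invertible; for $\lambda \neq 0$ in this neighborhood, $-\lambda I$ is also invertible on $\N(T)$, so $T - \lambda I$ is invertible, showing that $0$ is not a limit point of $\sigma(T)$. The main obstacle is the converse (d) $\Rightarrow$ (b), where the closed-range hypothesis must be genuinely used. I would invoke $\sigma(T) = \sigma(T') \cup \{0\}$ (or $\sigma(T) = \sigma(T')$ when $\N(T) = \0$) to transfer the hypothesis to $T'$: $0$ is not a limit point of $\sigma(T')$ either. Bounded-below-ness of $T'$ places $0$ outside the approximate point spectrum $\sigma_{\mathrm{ap}}(T')$, and combined with the standard inclusion $\partial\sigma(T') \subseteq \sigma_{\mathrm{ap}}(T')$ this gives $0 \notin \partial\sigma(T')$. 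So either $0 \in \rho(T')$, which is (b), or $0$ lies in the interior of $\sigma(T')$ — but the latter would make $0$ a limit point of $\sigma(T')$, contradicting our hypothesis.
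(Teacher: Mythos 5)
Your proof is correct and uses essentially the same ingredients as the paper's: the reducing decomposition $T=T'\oplus 0$, the observation that $T'$ is injective with closed range (hence bounded below), and the boundary-of-spectrum/approximate-point-spectrum argument for the direction involving (d). The only difference is organizational — you route everything through (b) as a hub and prove (d)$\Rightarrow$(b) directly, whereas the paper runs the cycle (a)$\Rightarrow$(b)$\Rightarrow$(c)$\Rightarrow$(d)$\Rightarrow$(a) and handles the last implication contrapositively — but the mathematics is the same.
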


\begin{proof} 
(a)$\implies$(b): Let $T$ be a posinormal operator on $\H$ having closed
range such that $0\not\in \sigma_R\left(T|_{\N(T)^\perp}\right).$ Because
$\N(T)$ reduces $T$, recall that we have the decomposition
$$
T=T'\oplus 0
\quad\hbox{on}\quad
\H=\N(T)^\perp\oplus\N(T),
$$
where ${T'=T|_{\N(T)^\perp}\!\in\B[\N(T)^\perp\kern-1pt]}$, so that
$\N(T')=\0$ and $\R(T')$ is closed because $\R(T)$ is closed$.$ Because $0$
is not an eigenvalue of $T'$ and we are assuming $0\not\in\sigma_{R}(T')$,
the range of $T'$ must be dense$.$ However the range of $T'$ is closed and
thus $T'$ is surjective as well as injective --- it is invertible.

\vskip6pt\noi
(b)$\implies$(c):
Because $T'$ is invertible, we have $\R(T')=\N(T)^\perp=\R(T^*)$, with the
latter equality holding because $\R(T^*)$ is closed (because $\R(T)$ is
closed)$.$ Because $\R(T')=\R(T^*)$, we see $\R(T)=\R(T^*)$, which, by
definition, yields $T$ is coposinormal.

\vskip6pt\noi
(c)$\implies$(d):
Because $T$ is coposinormal as well as posinormal, and the range of $T^*$ is
closed, we have $\R(T)=\R(T^*)=\N(T)^\perp=\R(T').$ Thus $T'$ is both
injective and surjective --- it is invertible$.$ Recall that for
$\lambda\ne 0$, $T-\lambda I$ is invertible if and only if
$T'-\lambda I'$ is invertible$.$  Because $\rho(T')$ is open, there is an
$\epsilon>0$ such that $T'-\lambda I'$ is invertible on
$\N(T)^\perp=\R(T^*)$ whenever $|\lambda|<\epsilon.$ Thus, $T-\lambda I$ is
invertible whenever $0<|\lambda|<\epsilon$ and we see $0$ is not a limit
point of the spectrum of $T$.

\vskip6pt\noi
(d)$\implies$(a):
We establish the contrapositive implication$.$ Suppose that
$0\in\sigma_R(T').$ Because we are assuming $\R(T)$ is closed, $\R(T')$ is
also closed; moreover, it's injective$.$ Thus, $T'$ is bounded below$.$
Because $0$ is in the spectrum of $T'$ (in fact in the residual spectrum),
it cannot be in the boundary of $\sigma(T')$ because that would put $0$ in
the approximate point spectrum of $T'$ (implying $T'$ is not bounded
below)$.$ Thus, $0$ is an interior point of $\sigma(T')$, so that there is
an $\epsilon>0$ such that $T'-\lambda I'$ is not invertible whenever
$|\lambda|< epsilon.$ Hence, $T-\lambda I$ is not invertible whenever
$0<|\lambda|<\epsilon.$ Thus $0$ is a limit point of $\sigma(T)$, completing
the proof.
\end{proof}

\section{Posinormal Product of Posinormal Operators}

Recall the matrix representations developed in Section 2, for a pair of
commuting operators $A$ and $B$ on a Hilbert space $\H$ for which $\N(A)$
reduces $A$:
With respect to the decomposition\/
$\H=\N(A)^\perp\oplus\N(A)$,
\begin{equation*}
\label{Eqn:ddager}
\tag{$\ddagger$}
\qquad A=\big(\smallmatrix{A' & O \cr
                           O  & O \cr}\big)
\quad\hbox{\it and}\quad
B=\big(\smallmatrix{B' & O \cr
                    Y  & Z \cr}\big)
\quad\hbox{\it with}\quad
Y\!A'=O,
\end{equation*}
and\/ ${Y=O}$ if and only if\/ $\N(A)$ reduces\/ $B$.

\vskip6pt
In this section, we use these matrix representations to obtain necessary
conditions and sufficient conditions ensuring the product of two posinormal
operators will be posinormal.

\begin{lemma}\label{L:4.1}
{\it If\/ $A$ and\/ $B$ are commuting quasiposinormal operators on\/ $\H$
having the matrix representations \eqref{Eqn:ddager} with respect to the
decomposition\/ $\H=\N(A)^\perp\oplus\N(A)$, then}
$$
({\rm a}) \quad \N(Z)\sse\N(Z^*)\cap\N(Y^*)
\qquad\hbox{\it and}\qquad
({\rm b}) \quad \N(B')\cap\N(Y)\sse\N({B'}^*).
$$
{$\kern17pt${\rm (c)}}$\kern8pt$
{\it If $B^*$ is also quasiposinormal, then the above inclusions become
identities}\/.
\end{lemma}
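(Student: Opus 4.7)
The plan is to reduce all three parts of the lemma to the defining inclusion $\N(B)\sse\N(B^*)$ of quasiposinormality, read off block-wise from \eqref{Eqn:ddager}. Writing the adjoint as $B^*=\big(\smallmatrix{{B'}^* & Y^* \cr O & Z^* \cr}\big)$, I observe that $(x,y)\in\N(B)$ if and only if $B'x=0$ and $Yx+Zy=0$, while $(u,v)\in\N(B^*)$ if and only if ${B'}^*u+Y^*v=0$ and $Z^*v=0$. Before invoking this I should flag that the representation \eqref{Eqn:ddager} is available here: Proposition~\ref{P:2.2}(a) requires only that $\N(A)$ reduce $A$, which quasiposinormality of $A$ supplies.

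For part (a), I would test quasiposinormality of $B$ on vectors of the form $(0,y)$ with $y\in\N(Z)$: each such vector lies trivially in $\N(B)$, hence in $\N(B^*)$, which at once reads off $Y^*y=0$ and $Z^*y=0$, i.e.\ $y\in\N(Y^*)\cap\N(Z^*)$. For part (b), I would apply the same idea to vectors $(x,0)$ with $x\in\N(B')\cap\N(Y)$: these lie in $\N(B)$ and therefore in $\N(B^*)$, producing ${B'}^*x=0$ as required.

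For part (c), the additional assumption that $B^*$ is quasiposinormal gives $\N(B^*)\sse\N(B^{**})=\N(B)$, so $\N(B)=\N(B^*)$, and both of the preceding arguments reverse verbatim: a vector $(0,y)$ with $y\in\N(Z^*)\cap\N(Y^*)$ lies in $\N(B^*)=\N(B)$, forcing $Zy=0$; a vector $(x,0)$ with $x\in\N({B'}^*)$ lies in $\N(B^*)=\N(B)$, forcing both $B'x=0$ and $Yx=0$.

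There is no substantive obstacle here --- once the block equations characterizing $\N(B)$ and $\N(B^*)$ are in hand, the lemma is pure bookkeeping, and the only delicate point is the mild remark above that \eqref{Eqn:ddager} remains valid when $A$ is merely quasiposinormal rather than posinormal.
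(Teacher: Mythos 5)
Your proposal is correct and follows essentially the same route as the paper: both read off the block equations for $\N(B)$ and $\N(B^*)$ and test the inclusion $\N(B)\sse\N(B^*)$ on vectors of the form $(0,v)$ and $(u,0)$, with part (c) obtained by reversing the same computations once $\N(B)=\N(B^*)$. Your added remark that the representation \eqref{Eqn:ddager} is available for merely quasiposinormal $A$ is a harmless (and accurate) clarification, but nothing of substance differs.
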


\proof
Suppose that a quasiposinormal $A$ commutes with $B$, then $A$ and $B$ have
matrix representations \eqref{Eqn:ddager} and
$B^*\!=\big(\smallmatrix{{B'}^* & Y^* \cr
                         O      & Z^* \cr}\big).$
Because $B$ is quasiposinormal, ${\N(B)\sse\N(B^*)}$, so that for an
arbitrary ${(u,v)\in\N(A)^\perp\oplus\N(A)}$,
$$
(B'u,Yu+Zv)=(0,0)
\quad\limply\quad
{({B'}^*u+Y^*v,Z^*v)=(0,0)}.
$$
 
\vskip0pt\noi
(a)
Set ${u=0}$ in $\N(A)^\perp.\!$ Then ${(0,Zv)}={(0,0)}$ implies
${(Y^*v,Z^*v)}={(0,0)}$ for any ${v\in\N(A)}.$ Thus
${\N(Z)\kern-1pt\sse\kern-1pt\N(Z^*)\cap\N(Y^*)}$.

\vskip6pt\noi
(b) Now set ${v=0}$ in $\N(A).$ Then ${(B'u,Yu)=(0,0)}$ implies
${({B'}^*u, 0)=(0,0)}$ for any ${u\in\N(A)^\perp}.\!$ Thus
${\N(B')\cap\N(Y)\sse\N({B'}^*)}$.

\vskip6pt\noi
(c)
If ${\N(B)=\N(B^*)}$, then the above argument shows that
$$
\N(Z)=\N(Z^*)\cap\N(Y^*)
\qquad\hbox{and}\qquad
\N(B')\cap\N(Y)=\N({B'}^*).                                    \eqno{\qed}
$$

\begin{theorem}\label{T:4.2}
{\it Let\/ $A$ and\/ $B$ be commuting quasiposinormal operators having the
matrix representations \eqref{Eqn:ddager}}\/.
\begin{description}
\item{$\kern-4pt$\rm(a)}
{\it If\/ $B'$ is quasiposinormal, then\/ $AB$ is quasiposinormal}.
\vskip4pt
\item{$\kern-4pt$\rm(b)}
{\it If\/ $Z^*$ is quasiposinormal, then\/ $AB$ has closed range whenever\/
$A$ and\/ $B$ have closed range}.
\item{$\kern-4pt$\rm(c)}
{\it If\/ $Z^*$ quasiposinormal and $A$ and\/ $B$ have closed range, then
$B'$ and $Z$ have closed range}\/.
\end{description}
{\it In other words}\/,
\vskip4pt\noi
\begin{description}
\item{$\kern-6pt$\rm(a$'$)}
{\it If\/ $B^*|_{\N(A)^\perp}\!$ is coquasiposinormal, then\/ $AB$ is
quasiposinormal}.
\vskip4pt
\item{$\kern-6pt$\rm(b$'$)}
{\it If\/ $B|_{\N(A)}$ is coquasiposinormal, then\/ $AB$ has closed range
whenever\/ $A$ and\/ $B$ have closed range}.
\item{$\kern-6pt$\rm(c$'$)}
{\it If\/ $B|_{\N(A)}$ is coquasiposinormal and $A$ and\/ $B$ have closed
range, then the above coquasiposinormal restrictions have closed range}\/.
\end{description}
\end{theorem}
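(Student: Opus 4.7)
The plan is to exploit the block decomposition $AB = A'B' \oplus O$ on $\H=\N(A)^\perp\oplus\N(A)$, which follows from Proposition~\ref{P:2.2}(a) together with the commutation of $A$ and $B$ (this commutation additionally forces $A'B'=B'A'$ on $\N(A)^\perp$). Each of (a), (b), (c) then reduces to a statement about the blocks $A'$, $B'$, $Y$, $Z$, and I can invoke Lemmas~\ref{L:2.2} and~\ref{L:4.1} together with the fact that $A'$ is injective (always) and bounded below (when $\R(A)$ is closed).

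For (a), since $A'$ is injective, an element $(u_1,u_2)$ lies in $\N(AB)$ exactly when $B'u_1=0$. Quasiposinormality of $B'$ then gives ${B'}^*u_1=0$. Using $(AB)^* = ({B'}^*{A'}^*)\oplus O = ({A'}^*{B'}^*)\oplus O$ (by $A'B'=B'A'$), one gets $(AB)^*(u_1,u_2)=0$, so $\N(AB)\sse\N((AB)^*)$, i.e., $AB$ is quasiposinormal.

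For (b), closedness of $\R(A)$ makes $A'$ injective with closed range and hence bounded below; a Cauchy-sequence argument mirroring the proof of Proposition~\ref{P:2.2}(b) then shows $\R(AB)=\R(A'B')\oplus\{0\}$ is closed iff $\R(B')$ is closed. Lemma~\ref{L:2.2}(a) reduces closedness of $\R(B')$ to the inclusion $\R(Y^*|_{\N(Z^*)})\sse\R({B'}^*)$. Here the key observation—the crux of the proof—is that quasiposinormality of $Z^*$ yields $\N(Z^*)\sse\N(Z)$, while Lemma~\ref{L:4.1}(a) (applicable since $A$ and $B$ are quasiposinormal and commute) gives $\N(Z)\sse\N(Y^*)$; stringing these together, $Y^*$ annihilates $\N(Z^*)$, so $\R(Y^*|_{\N(Z^*)})=\{0\}$ and the required inclusion is trivial. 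Thus $\R(B')$, and hence $\R(AB)$, is closed.

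For (c), closedness of $\R(B')$ was already established in (b). To obtain closedness of $\R(Z)$, note that $A$ and $B$ being quasiposinormal with closed range are in fact posinormal, so Lemma~\ref{L:2.2}(b) applies and yields $\R(Z)^-\sse\R(Z^*)$; simultaneously the hypothesis that $Z^*$ is quasiposinormal is precisely $\R(Z^*)^-\sse\R(Z)^-$. The two inclusions collapse to $\R(Z^*)=\R(Z)^-$, a closed set, forcing $\R(Z)$ closed as well. The main obstacle throughout is recognizing the chain $\N(Z^*)\sse\N(Z)\sse\N(Y^*)$ that converts the assumption on $Z^*$ into the hypothesis of Lemma~\ref{L:2.2}(a); once this is in hand, parts (b) and (c) follow mechanically, and (a) is an immediate consequence of the commutation $A'B'=B'A'$.
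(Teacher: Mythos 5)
Your proof is correct and follows essentially the same route as the paper's: the key chain $\N(Z^*)\sse\N(Z)\sse\N(Y^*)$ (from quasiposinormality of $Z^*$ and Lemma~\ref{L:4.1}(a)) giving $Y^*|_{\N(Z^*)}=O$ so that Proposition~\ref{P:2.2}(b) and Lemma~\ref{L:2.2} apply, together with the kernel computation $\N(A'B')=\N(B')\sse\N({B'}^*)$ for part (a). The only cosmetic difference is that you unpack Proposition~\ref{P:2.2}(b) into its Lemma~\ref{L:2.2}(a) plus bounded-below ingredients rather than citing it wholesale.
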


\begin{proof}
Suppose that a quasiposinormal $A$ commutes with $B$, then $A$ and $B$ have
matrix representations \eqref{Eqn:ddager}:
$$
\!A=\big(\smallmatrix{A' & O \cr
                      O  & O \cr}\big)
\quad\hbox{and}\quad
B=\big(\smallmatrix{B' & O \cr
                    Y  & Z \cr}\big)
\quad\hbox{with}\quad
B^*=\big(\smallmatrix{{B'}^* & Y^* \cr
                      O      & Z^* \cr}\big),
$$
\vskip-4pt\noi
$$
\hbox{where}\quad
{B'}^*\!=B^*|_{\N(A)^\perp}
\quad\hbox{and}\quad
Z=B|_{\N(A)}.
$$
Thus (a), (b) and (c) are equivalent to (a$'$), (b$'$) and (c$'$),
respectively$.$ Also,
$$
\hbox{$A$ is quasiposinormal $\iff$ $A'$ is quasiposinormal}.
$$
\vskip-2pt

\vskip6pt\noi
(a)
Note that ${AB=BA=B'\!A'\oplus\,O=A'B'\oplus\,O.}$ Moreover, since
${\N(A')=\0}$,
$$
\N(B'\!A')=\N(B').
$$
Indeed
$x\in\N(B'A')\Leftrightarrow x\in\N(A'B')\Leftrightarrow A'B'x=0
\Leftrightarrow B'x=0\Leftrightarrow x\in\N(B').$ Now suppose
$B'$ is quasiposinormal, that is,
$$
\N(B')\sse\N({B'}^*).
$$
In this case, since $\N(B')\sse\N({B'}^*)$ and $\N(B'\!A')=\N(B')$,
$$
\N(B'\!A')=\N(B')\sse\N({B'}^*)\sse\N({A'}^*{B'}^*)=\N((B'\!A')^*),
$$
so that $B'\!A'$ is quasiposinormal$.$ Thus $A'B'$ is quasiposinormal and so
is $AB.$ Hence
$$
\hbox{$B'$ quasiposinormal $\;\limply$ $A'B'$ quasiposinormal $\iff$ $AB$
quasiposinormal}.
$$
(The assumption ``$B$ is quasiposinormal'' was not \hbox{used in item (a).)}

\vskip6pt\noi
(b)
Now suppose $B$ is also quasiposinormal$.$ Then Lemma \ref{L:4.1}(a) ensures
that ${\N(Z)\sse\N(Z^*)\cap\N(Y^*)}$, which means
$$
\hbox{$Z\;$ is quasiposinormal \quad and \quad $Y^*|_{\N(Z)}=O$}.
$$
Thus if the quasiposinormal $Z=B|_{\N(A)}$ is coquasiposinormal as well,
then $\N(Z)=\N(Z^*)$ so that $Y^*|_{\N(Z^*)}=O.$ Therefore $AB$ has closed
range whenever $A$ and $B$ have closed range according to Proposition
\ref{P:2.2}(b).

\vskip6pt\noi
(c)
Let $A$ and $B$ have closed range$.$  Because $A$ and $B$ are commuting
posinormal operators Lemma~\ref{L:4.1} yields
$\N(Z)\sse\N(Z^*)\cap\N(Y^*)\subseteq\N(Y^*).$ Thus, ${Y^*|_{\N(Z^*)}=O}$
because $\N(Z^*)\subseteq \N(Z)$ given our assumption that $Z^*$ is
quasiposinormal. Lemma \ref{L:2.2}(a) now shows that $B'$ has closed range.
Since $B$ is posinormal (i.e., quasiposinormal with closed range) and $Z^{*}$
is quasiposinormal, Lemma \ref{L:2.2}(b) shows that $Z=B|_{\N(A)}$ has
closed range.
\end{proof}

\vskip6pt
A rewriting of Theorem \ref{T:4.2} gives a partial answer to Question
\ref{Q:1.1}.

\begin{corollary}\label{C:4.3}
{\it If\/ $A$ and\/ $B$ are commuting posinormal operators with closed
range, then\/ $AB$ is posinormal with closed range if\/ $B'$ is posinormal
and\/ $Z$ is coposinormal}\/.
\end{corollary}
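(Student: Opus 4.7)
The plan is to treat Corollary~\ref{C:4.3} as a bookkeeping consequence of Theorem~\ref{T:4.2}. Since posinormal operators are quasiposinormal, and since the two notions coincide whenever the range is closed, the strategy is to verify the hypotheses of parts~(a) and~(b) of Theorem~\ref{T:4.2}, extract that $AB$ is quasiposinormal and has closed range, and then upgrade ``quasiposinormal'' to ``posinormal'' via closed-rangeness.

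First I would set up the standing matrix representations. Because $A$ is posinormal, it is quasiposinormal, so by Proposition~\ref{P:2.1}(a) its kernel reduces $A$; hence, relative to $\H=\N(A)^\perp\oplus\N(A)$, the commuting pair $A,B$ has the representations \eqref{Eqn:ddager} whose lower-right corner of $B$ we denote by $Z$ and whose upper-left corner we denote by $B'$. Both $A$ and $B$, being posinormal, are in particular quasiposinormal, so Theorem~\ref{T:4.2} applies.

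Next I would invoke Theorem~\ref{T:4.2}(a): the hypothesis that $B'$ is posinormal implies $B'$ is quasiposinormal, which gives that $AB$ is quasiposinormal. Then I would invoke Theorem~\ref{T:4.2}(b): the hypothesis that $Z$ is coposinormal means $Z^*$ is posinormal, hence quasiposinormal, so combined with the closed-range assumption on $A$ and $B$ we conclude that $AB$ has closed range.

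Finally, I would close by noting that for any operator, quasiposinormal together with closed range is equivalent to posinormal (as recorded in Section~2). Since $AB$ is quasiposinormal and $\R(AB)$ is closed, $AB$ is posinormal with closed range, which is the desired conclusion. The main obstacle is merely notational --- keeping straight that the ``co-'' prefix on $Z$ translates to the quasiposinormality hypothesis of Theorem~\ref{T:4.2}(b), and observing that closed-range posinormal is the same as closed-range quasiposinormal at both the input and output of the argument; there is no new analytic content to supply.
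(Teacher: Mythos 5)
Your proposal is correct and matches the paper's intent exactly: the paper presents Corollary~\ref{C:4.3} as ``a rewriting of Theorem~\ref{T:4.2},'' i.e., apply parts (a) and (b) with the posinormal hypotheses weakened to quasiposinormal, then use that quasiposinormal plus closed range equals posinormal. No gaps.
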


\begin{remark}\label{R:4.4}
 If\/ $A$ and\/ $B$ are commuting quasiposinormal operators, then
$$
\N(B')\cap\N(Y)\sse\N({B'}^*)
$$
according to Lemma \ref{L:4.1}(b)$.$ Thus Theorem \ref{T:4.2}(a) ensures
that
$$
\N(B')\sse\N(Y)
\;\;\limply\;\,
\hbox{$B'$ is quasiposinormal}
\;\;\limply\;\,
\hbox{$AB$ is quasiposinormal}.
$$
(The above holds, in particular, if ${Y\!=O}$; i.e., if $\N(A)$ also
reduces $B$)$.$ Thus (cf$.$ Proposition \ref{P:2.2}(b)$\kern.5pt$),
{\it if\/ $A$ and\ $B$ commute and\/ ${\N(B')\sse\N(Y)}$, then}
$$
\hbox{\it $A$ and $B$ posinormal with closed range}
\;\;\limply\;\,
\hbox{\it $AB$ posinormal with closed range}.
$$
\end{remark}

We can replace the commuting assumption with coincident kernels.

\begin{proposition}\label{P:4.5}
{\it If\/ $A$ and\/ $B$ are posinormal operators with closed range, and if
they have the same kernel, then their product is posinormal with closed
range.}
\end{proposition}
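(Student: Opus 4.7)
The plan is to exploit that $A$ and $B$ share a common reducing subspace (namely their common kernel), which simultaneously block-diagonalizes both operators, so that commutativity is not even needed.

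First I would set $\mathcal{N} = \N(A) = \N(B)$. Since $A$ and $B$ are posinormal, part (a) of Proposition \ref{P:2.1} tells me that $\mathcal{N}$ reduces $A$ and $\mathcal{N}$ reduces $B$. Relative to the decomposition $\H = \mathcal{N}^\perp \oplus \mathcal{N}$, both operators are therefore block-diagonal:
$$
A = \bigl(\smallmatrix{A' & O \cr O & O \cr}\bigr),
\qquad
B = \bigl(\smallmatrix{B' & O \cr O & O \cr}\bigr),
$$
with $A', B' \in \B[\mathcal{N}^\perp]$. By construction, $A'$ and $B'$ are injective on $\mathcal{N}^\perp$, and they inherit closed range from $A$ and $B$ (since $\R(A) = \R(A') \oplus \{0\}$, and similarly for $B$). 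Thus both $A'$ and $B'$ are bounded below on $\mathcal{N}^\perp$.

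Next I would multiply the block forms. Because both matrices are block-diagonal, no commutativity hypothesis is needed:
$$
AB = \bigl(\smallmatrix{A'B' & O \cr O & O \cr}\bigr) = A'B' \oplus O.
$$
The operator $A'B'$ is a composition of two bounded-below operators, so it is itself bounded below, hence injective with closed range on $\mathcal{N}^\perp$. As noted in Section 2, any injective operator with closed range is posinormal (its adjoint is surjective, so its range is trivially contained in its adjoint's range). Therefore $A'B'$ is posinormal with closed range.

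Finally, I would lift this back to $\H$: since $\R(AB) = \R(A'B') \oplus \{0\}$ is closed, and $\R((AB)^*) = \R((A'B')^*) \oplus \{0\}$ contains $\R(A'B') \oplus \{0\} = \R(AB)$, we conclude $AB$ is posinormal with closed range. I do not foresee any real obstacle here; the argument is essentially the observation that a common kernel gives a common reducing decomposition, after which posinormality of the product follows from the general fact that injective closed-range operators are posinormal.
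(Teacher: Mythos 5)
Your proof is correct and follows essentially the same route as the paper's: reduce by the common kernel to get both operators block-diagonal, observe that $A'$ and $B'$ are injective with closed range (hence bounded below), so $A'B'$ is bounded below and therefore injective with closed range, hence posinormal. The paper reaches posinormality of $A'B'$ via "quasiposinormal with closed range" rather than "adjoint surjective," but this is the same fact in different words.
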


\proof
Let $A$ and $B$ be closed-range posinormal operators on $\H$ such that
$\N(A)=\N(B).$ Then $\N(A)$ is reducing for both $A$ and $B$ and by
Proposition \ref{P:2.2}(a), $A$ and $B$ have the following matrix
representations with respect to the decomposition
$\H=\N(A)^\perp\oplus\N(A)$:
$$
A=\big(\smallmatrix{A' & O \cr
                    O  & O \cr}\big)
\quad\hbox{ and}\quad
B=\big(\smallmatrix{B' & O \cr
                    O  & Z \cr}\big).
$$
Because $A$ and $B$ have closed range, the same is true of $A'$ and $B'.$
Moreover both $A'$ and $B'$ are injective, which means $A'$ and $B'$ are
bounded below. Thus, their products $A'B'$ and $B'A'$ are bounded below and
we see both $A'B'$ and $B'A'$ have closed range$.$ Moreover, as
$\N(A')=\N(B')=\0$, we get $\N(A'B')=\N(B'\!A')=\0$ so that $A'B'$ and
$B'\!A'$ are quasiposinormal with closed range, thus posinormal$.$ It
follows that $AB$ and $BA$ are posinormal; e.g., 
$$
\R(AB)=\R(A'B')\subseteq\R((A'B')^*)=\R({B'}^*{A'}^*)=\R(B^*A^*)=\R((AB)^*).
                                                               \eqno{\qed}
$$             

\vskip6pt
If a pair of closed-range commuting posinormal operators is such that at
least one of them is coposinormal, then their product is closed-range
posinormal.

\begin{theorem}\label{T:4.6}
{\it If a closed-range operator $A$ that is both posinormal and coposinormal
commutes with a closed-range posinormal operator $B$, then\/ $AB$ is
posinormal with closed
range}\/.
\end{theorem}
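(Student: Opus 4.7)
The plan is to reduce to a diagonal situation by using that the joint posinormal/coposinormal hypothesis forces $A$ to be invertible on $\N(A)^\perp$. Since $A$ is posinormal, Proposition \ref{P:2.1}(a) tells us $\N(A)$ reduces $A$, so with respect to $\H = \N(A)^\perp \oplus \N(A)$ we have $A = A' \oplus O$ with $A' = A|_{\N(A)^\perp}$ injective. By Proposition \ref{P:3.5}, the hypothesis that $A$ is both posinormal and coposinormal with closed range is equivalent to $A'$ being invertible on $\N(A)^\perp$; this is the fact that drives the argument.

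Next I will apply Proposition \ref{P:2.2}(a) to the commuting pair $A, B$ to get $B = \big(\smallmatrix{B' & O \cr Y & Z \cr}\big)$ with $Y A' = O$. Because $A'$ is surjective, this forces $Y = O$, so $\N(A)$ also reduces $B$ and $B = B' \oplus Z$. Since $B$ is posinormal and $\N(A)^\perp$ is reducing for $B$, Proposition \ref{P:2.1}(b) gives that $B'$ is posinormal, hence quasiposinormal. At this point I invoke Theorem \ref{T:4.2}(a) to conclude $AB$ is quasiposinormal, and Proposition \ref{P:2.2}(b) to conclude $\R(AB)$ is closed: the inclusion required by the latter, $\R(Y^*|_{\N(Z^*)}) \sse \R({B'}^*)$, is trivial because $Y = O$. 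A quasiposinormal operator with closed range is posinormal.

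The principal obstacle is the first step, namely translating the symmetric hypothesis on $A$ into invertibility of $A'$ via Proposition \ref{P:3.5}. Once $A'$ is known to be invertible, the relation $YA' = O$ immediately collapses $B$ into block-diagonal form, and the theorem reduces to the trivial $Y = O$ case already flagged in Remark \ref{R:4.4}.
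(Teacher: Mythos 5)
Your proof is correct and follows essentially the same route as the paper's: both arguments reduce to showing $Y=O$ (so that $\N(A)$ reduces $B$ and $B'$ is posinormal) and then invoke Theorem \ref{T:4.2}(a) together with Proposition \ref{P:2.2}(b). The only cosmetic difference is that you obtain $Y=O$ from the surjectivity of $A'$ via Proposition \ref{P:3.5}, whereas the paper uses $\N(A)=\N(A^*)$ to get injectivity of ${A'}^*$ and deduces $Y^*=O$ from ${A'}^*Y^*=O$.
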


\begin{proof}
Let $A$ and $B$ be commuting, closed-range operators such that $A$ is both
posinormal and coposinormal and $B$ is posinormal$.$  Because $\N(A)$
reduces $A$ (Proposition \ref{P:2.1}(a)), $A$ and $B$ have the matrix
representations \eqref{Eqn:ddager} relative to the decomposition
$\H=\N(A)^\perp \oplus \N(A).$ Because $A$ is both posinormal and
coposinormal $\N(A) = \N(A^*).$ Therefore
$\H={\N(A)^\perp\oplus\N(A)}={\N(A^*)^\perp\oplus\N(A^*)}.$ Hence
${A'}^*=A^*|_{\N(A^*)^\perp}$ is injective (as well as $A'$)$.$ Because $A$
and $B$ commute, Proposition \ref{P:2.2}(a) ensures that ${Y\!A'=O}$ so that
${{A'}^*Y^*=O}$; equivalently, $A^*Y^*=0.$ Hence, because $A^*$ is
injective, $Y^*=O$ and therefore $Y=O.$  Hence $\N(A)$ reduces $B$ by
Proposition \ref{P:2.2}(a)$.$ This implies that $B'$ is posinormal because
$B$ is posinormal$.$ By Theorem \ref{T:4.2}(a), $AB$ is quasiposinormal;
however, ${\R(AB)}$ is closed by Proposition \ref{P:2.2}(b) because ${Y=O}$,
and thus $AB$ is posinormal.
\end{proof}

\vskip6pt
The preceding theorem generalizes Theorem 3 of \cite{DD}, a result stated in
language different from ours:

\begin{Dthm}
If\/ ${A,B\in\B[\H]}$ are EP operators with closed ranges and\/ ${AB=BA}$,
then\/ $AB$ is the $EP$ operator with a closed range also.
\end{Dthm}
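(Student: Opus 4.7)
The plan is to deduce the Djordjevi\'c Theorem from two applications of Theorem~\ref{T:4.6}, after translating the EP condition into the language of posinormality. The key observation is that for an operator $T$ with closed range, being EP --- that is, $\R(T)=\R(T^*)$ --- is equivalent to $T$ being both posinormal and coposinormal, since the identity $\R(T)=\R(T^*)$ splits into the two inclusions $\R(T)\sse\R(T^*)$ (posinormality of $T$) and $\R(T^*)\sse\R(T)$ (posinormality of $T^*$, i.e., coposinormality of $T$).

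Given this translation, the hypothesis of the Djordjevi\'c Theorem says that $A$ is a closed-range operator that is both posinormal and coposinormal, and that $B$ is a closed-range posinormal operator, with $AB=BA$. Theorem~\ref{T:4.6} applies directly and yields that $AB$ is posinormal with closed range. This takes care of half of what we need.

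To finish, I would show that $AB$ is also coposinormal by applying Theorem~\ref{T:4.6} a second time to the commuting pair $(B^*,A^*)$. Since $B$ is EP with closed range, so is $B^*$; in particular $B^*$ is closed-range, posinormal, and coposinormal. Likewise $A^*$ is closed-range and posinormal, because $A$ is EP with closed range. The commutativity $B^*A^*=A^*B^*$ follows by taking adjoints in $AB=BA$. Theorem~\ref{T:4.6} then gives that $B^*A^*=(AB)^*$ is posinormal with closed range, which is precisely the statement that $AB$ is coposinormal. Combining the two conclusions, $AB$ is a closed-range operator that is both posinormal and coposinormal, so $\R(AB)=\R((AB)^*)$ and $AB$ is EP with closed range.

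I do not expect any genuine obstacle: once the EP condition is recognized as the conjunction of posinormality and coposinormality (which is immediate for closed-range operators), the theorem is essentially a symmetric double invocation of Theorem~\ref{T:4.6}. The only mild subtlety is remembering that one must re-apply the theorem to $(B^*,A^*)$, and not to $(A,B)$ with a different hypothesis, in order to get coposinormality of $AB$.
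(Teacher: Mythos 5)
Your proof is correct and is essentially identical to the paper's own treatment (Corollary \ref{C:4.7}): translate EP into posinormal plus coposinormal, apply Theorem \ref{T:4.6} once to get that $AB$ is posinormal with closed range, and then apply it again with $B^*$ in the role of $A$ and $A^*$ in the role of $B$ to conclude that $(AB)^*=B^*A^*$ is posinormal, i.e., $AB$ is coposinormal.
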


\vskip6pt
Djordjevic's theorem arises in a line of investigation distinct from that
started by Rhaly in 1994 when he introduced the notion of posinormality$.$
In the final section of this paper, we briefly explore the connections
between posinormal operators and EP operators, presenting a new proof of the
Hartwig-Katz Theorem \cite{HK} characterizing ``EP matrices''$.$ 
\vskip6pt
The following corollary of our Theorem \ref{T:4.6} above is equivalent to
Djordjevi\'{c}'s theorem:

\begin{corollary}\label{C:4.7}
{\it Suppose that \/ $A$ and\/ $B$ are commuting posinormal and coposinormal
operators with closed range; then\/ $AB$ is posinormal and coposinormal
with closed range}\/.
\end{corollary}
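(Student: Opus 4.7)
The plan is to derive Corollary \ref{C:4.7} by applying Theorem \ref{T:4.6} twice: once to $A,B$ themselves, and once to the pair $A^*,B^*$. The first application immediately gives posinormality of $AB$ with closed range; the second application, after passing to adjoints, yields posinormality of $(AB)^*$ with closed range, which is the coposinormality of $AB$.

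More concretely, I would first note that the hypothesis of Theorem \ref{T:4.6} is symmetric with respect to the two operators in the sense that either one can play the role of the ``both posinormal and coposinormal'' factor. Thus, applying Theorem \ref{T:4.6} with $A$ as the doubly-structured factor and $B$ as the posinormal factor yields that $AB$ is posinormal with closed range.

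Next, I would pass to the adjoint pair. Since $A$ and $B$ commute, so do $A^*$ and $B^*$ (take adjoints of $AB=BA$). Each of $A^*, B^*$ is posinormal (because $A, B$ are coposinormal), each is coposinormal (because $A,B$ are posinormal, and $(A^*)^* = A$), and each has closed range (an operator has closed range if and only if its adjoint does). Therefore Theorem \ref{T:4.6} applies to $A^*$ and $B^*$ and gives that $A^*B^*$ is posinormal with closed range. Since $A^*B^* = (BA)^* = (AB)^*$, this says precisely that $AB$ is coposinormal (and again with closed range, consistent with the first step).

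There is essentially no obstacle here: the entire argument is a bookkeeping exercise in the symmetry of the posinormal/coposinormal and closed-range conditions under adjunction, leveraging the already-proved Theorem \ref{T:4.6}. The only thing to check carefully is that the adjoint pair $(A^*,B^*)$ satisfies all the hypotheses of Theorem \ref{T:4.6}, which is the content of the short verification above.
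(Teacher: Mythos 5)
Your proof is correct and matches the paper's own argument: both apply Theorem \ref{T:4.6} once to the pair $A,B$ and once to the adjoint pair (the paper uses $B^*$ in the role of $A$ and $A^*$ in the role of $B$, but since the operators commute this is the same as your ordering). The verification that $A^*,B^*$ inherit all the hypotheses is exactly the bookkeeping the paper leaves implicit.
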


\begin{proof}
Applying Theorem \ref{T:4.6}, we see that $AB$ has closed range and is
posinormal. Applying Theorem \ref{T:4.6} with $B^*$ playing the role of $A$
and $A^*$ playing the role of $B$, we obtain $B^*A^*$ is posinormal; i.e.,
$AB$ is coposinormal.
\end{proof}

\section{Posinormal Operators, EP Operators, and the Hartwig--Katz Theorem}

If $T$ is a posinormal operator on $\CC^n\!$, then the range of $T$ is
necessarily closed and the inclusion ${\R(T)\sse\R(T^*)}$ implies that
${\R(T)=\R(T^*)}$ because $\R(T)$ and $\R(T^*)$ have the same dimension$.$
Thus, in the finite-dimensional setting, an operator is posinormal if and
only if its range equals that of its adjoint; that is, in finite dimensions,
a posinormal operator is necessary both posinormal and coposinormal$.$
Switching to matrix language, we see that an ${n\times n}$ matrix (with
complex entries) is posinormal provided its range (column space) is the same
as the range of its conjugate transpose$.$ Such matrices are known as ``EP
matrices.''

\vskip6pt
The modern definition of ``EP matrix'' evolved from Schwerdtfeger's notion
of ``$EP_r$ matrix'' (\cite[p$.$ 130]{Sch}, 1950)$.$ For an ${n\times n}$
matrix $A$, let $A_{(j)}$ denote the $j$-th column of $A$ and $A^{(j)}$
denote the corresponding $j$-th row (${1\le j\le n}$)$.$ Schwerdtfeger
defines an ${n\times n}$ matrix $A$ of rank $r$ to be a $P_r$ matrix
provided there exist indices ${i_1,i_2,\ldots i_r}$ with
${1\le i_1<i_2<\ldots<i_r\le r}$, such that
${\{A_{(i_1)},A_{(i_2)},\ldots A_{(i_r)}\}}$ and
${\{A^{(i_1)}, A^(i_2)\},\ldots A^{(i_r)}\}}$ are both linearly independent
sets of vectors$.$ When these sets are appropriately chosen
(see p$.$ 130 of \cite{Sch}), there is reason to call the vectors these sets
contain {\it principal vectors}\/ (because they correspond to an
${r\times r}$ rank $r$ principal submatrix of $A$)$.$ Schwerdtfeger points
out that every symmetric and every skew symmetric matrix $A$ is a $P_r$
matrix$.$ Schwerdtfeger defines ``$EP_r$ matrix'' in the two sentences
following Theorem 18.1 on page 130 of \cite{Sch}:

\vskip6pt
\begin{quotation}
{\small
The notion of $P_r$ matrix may be further restricted so that it still covers
the symmetric and skew symmetric matrices as well as other types of matrices
to be mentioned later on$.$ An $n$-matrix $A$ [an $n$-matrix is an
${n\times n}$ matrix] may be called and $EP_r$ matrix if it is a $P_r$
matrix and the linear relations among its rows are the same as those among
its corresponding columns.
}
\end{quotation}

\vskip6pt\noi
Schwerdtfeger then explains what ``same linear relations'' means$.$ In
modern notation, it means that ${\N(A)=\N(A^T)}.$ In fact, Schwerdtfeger's
definition of $EP_r$ matrix may be expressed: An ${n\times n}$ matrix $A$ is
an $EP_r$ matrix provided that $\N(A)=\N(A^T).$ We see immediately, that
Schwerdtfeger has ``further restricted so that it still covers the symmetric
and skew symmetric matrices$.$'' \hbox{Note well that, e.g.,}
$$
A=\big(\smallmatrix{1 & i  \cr
                    i & -1 \cr}\big),
$$
being symmetric, is an $EP_r$ matrix$.$ However, $A$ is not an EP matrix
because its range, the one-dimensional subspace of $\CC^2$ spanned by
${\big(\smallmatrix{1 \cr
                    i \cr}\big)}$,
is {\it not}\/ the same as the range of $A^*$, which is the one-dimensional
subspace of $\CC^2$ spanned by
${\big(\smallmatrix{1  \cr
                    -i \cr}\big)}.$
Of course, if an $EP_r$ matrix $A$ has real entries, then
${\N(A)=\N(A^T)=\N(A^*)}$ and, upon taking orthogonal complements, we have
${\R(A)=\R(A^*)}.$ Thus, an $EP_r$ matrix with real entries is an EP
matrix$.$

\vskip6pt
Pearl (\cite[p$.$ 674]{Prl}, 1966) mischaracterized Schwerdtfeger's
definition of $EP_r$ matrix, stating that

\vskip6pt
\begin{quotation}
{\small
Schwerdtfeger (\cite[p$.$ 130]{Sch}) has called a square matrix of rank $r$
an $EP_r$ matrix if it satisfies the condition:
$$
AX=0\;\text{if and only if}\;A^*X =0\quad[\text{i.e.,}\;\N(A)=\N(A^*)].
$$
}
\end{quotation}

\vskip0pt\noi
It seems that since Pearl's paper appeared, most have assumed that
Schwerdtfeger's $EP_r$ matrices are precisely today's EP matrices, but
that's not quite true.

\vskip6pt
Generalizing the notion of ``EP matrix,'' Campbell and Meyer
(\cite{CampMeyer}, 1975) introduced EP operators, which may be characterized
as follows: a Hilbert space operator $T$ is EP provided that $T$ has closed
range and ${\R(T)=\R(T^*)}.$ Thus, according to the Campbell--Meyer
definition, an EP operator in $\BH$ is a closed-range operator that is both
posinormal and coposinormal$.$ It's not clear why Schwerdtfeger chose the
$E$ in his ``$EP_r$'' designation$.$ Fortunately, there is a useful
interpretation of ``EP'': an EP-operator is naturally associated with
``Equal Projectors.''

\vskip6pt
We now discuss why EP operators may be viewed as ``equal-projector''
operators$.$ Let ${T\in\BH}$ have closed range so that $\H$ has orthogonal
decompositions $\H={\R(T^*)\oplus\N(T)}$ and $\H={\R(T)\oplus\N(T^*)}.$
Thus, in particular we have
$$
\R(T)=T(\H)=T\left(\R(T^*)\oplus\N(T)\right)=T(\R(T^*)),
$$
and we see that the restriction ${T|_{\R(T^*)}}$ is an invertible operator
mapping $\R(T^*)$ onto $\R(T).$ The {\it generalized inverse}\/ $T^\dagger$
of $T$, which is also called its pseudoinverse or its Moore-Penrose inverse,
is the operator that takes an element of $\R(T)$ to its unique $T$ preimage
in $\R(T^*)$ and takes elements of $\N(T^*)$ to zero; thus,
$$
T^\dagger=(T|_{\R(T^*)})^{-1}P_{\R(T)}.
$$
Observe that 
$$
T^\dagger T=P_{\R(T^*)}
\quad\text{and}\quad
TT^\dagger=P_{\R(T)}.
$$
Hence, if $T$ is an operator with closed range, then $T$ is an EP operator
($\R(T)= \R(T^*)$) if and only if the two projectors $T^\dagger T$ and
$TT^\dagger$ are equal; that is, $T$ is an EP-operator if and only if $T$
commutes with its generalized inverse$.$ Pearl (\cite{Prl}, 1965), working
in the matrix setting, was the first to provide this characterization; in
addition, Pearl provides an explicit formula for $T^\dagger$ when $T$ is a
matrix---see Lemma 1 and Corollary 1 of \cite{Prl}.

\vskip6pt
We have seen that a closed-range operator ${T\in\B(\H)}$ is EP if and only
if $T^\dagger T-{TT^\dagger=0}.$ Itoh (\cite{Itoh}, 2005) introduced hypo-EP
operators, defining them as \hbox{follows}: ${T\in\BH}$ is hypo-EP provided
that $T$ has closed range and ${T^\dagger T-TT^\dagger\ge0}.$ Itoh
\cite[Proposition 2.1]{Itoh} then provides several conditions equivalent to
an operator's being hypo-EP, including ${T\in\B(\H)}$ is hypo-EP if and only
if $T$ has closed range and ${\R(T)\kern-1pt\sse\kern-1pt\R(T^*\kern-1pt)}.$
$\kern-1pt$Thus a hypo-EP operator is a posinormal operator with
\hbox{closed range}.

\vskip6pt
In \cite{JV}, Johnson and Vinoth provide the following sufficient condition
for a product of operators to be EP:

\begin{JVthm}
Let\/ $A$ be a hypo-EP operator and\/ ${B\in\BH}$ have closed range$.$ If\/
${\R(B)\sse\R(A)}$ and\/ ${\N(B)\sse\N(A)}$, then\/ $AB$ is hypo-EP. 
\end{JVthm}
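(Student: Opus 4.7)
The plan is to verify the two defining conditions for $AB$ to be hypo-EP: closedness of $\R(AB)$ and the inclusion $\R(AB)\subseteq\R((AB)^*)$ (equivalently, $\N(AB)\subseteq\N((AB)^*)$). The whole argument rests on a single key observation: because $A$ is hypo-EP, it is posinormal with closed range, so $\R(A)\subseteq\R(A^*)=\N(A)^\perp$. Combined with the hypothesis $\R(B)\subseteq\R(A)$, this yields
\[
\R(B)\subseteq\N(A)^\perp,
\]
so $\R(B)$ and $\N(A)$ are orthogonal. In particular $\N(A)\cap\R(B)=\{0\}$.

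For closedness of $\R(AB)$, I would invoke Bouldin's Criterion (quoted in the discussion of Example \ref{TE}) with $S=A$ and $T=B$. Since $\N(A)\cap\R(B)=\{0\}$, we get $(\N(A)\cap\R(B))^\perp=\H$ and therefore $\N(A)\cap(\N(A)\cap\R(B))^\perp=\N(A)$. The angle between $\R(B)$ and $\N(A)$ is thus the angle between two mutually orthogonal closed subspaces, namely $\pi/2$, which is positive; hence $\R(AB)$ is closed.

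For posinormality of $AB$, given that $\R(AB)$ is closed it suffices to show $\N(AB)\subseteq\N((AB)^*)=\N(B^*A^*)$. Take $x\in\N(AB)$. Then $Bx\in\N(A)\cap\R(B)=\{0\}$, so $Bx=0$, i.e.\ $x\in\N(B)$. The hypothesis $\N(B)\subseteq\N(A)$ puts $x\in\N(A)$, and by Proposition \ref{P:2.1}(a) (or directly from posinormality) $\N(A)\subseteq\N(A^*)$, so $A^*x=0$ and therefore $B^*A^*x=0$, as required.

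The argument is quite short and I do not anticipate a serious obstacle; the only point requiring care is the correct bookkeeping of the hypotheses. Note in particular that nowhere does one need $B$ to be posinormal---only that $B$ has closed range and satisfies the two inclusions $\R(B)\subseteq\R(A)$, $\N(B)\subseteq\N(A)$, together with the hypo-EP property of $A$ (used to secure both $\R(A)\subseteq\N(A)^\perp$ and $\N(A)\subseteq\N(A^*)$). These two consequences of $A$'s hypo-EP property are precisely what drives the closed-range step and the kernel-containment step, respectively.
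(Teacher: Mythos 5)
Your argument is correct, and the comparison here is a little unusual: the paper does not prove this statement at all---it is quoted verbatim from Johnson and Vinoth \cite{JV} as an external result---so yours is a self-contained proof of something the authors only cite. The key reduction is sound: $A$ hypo-EP gives $\R(A)\sse\R(A^*)=\N(A)^\perp$ (the equality because $\R(A)$, hence $\R(A^*)$, is closed), so $\R(B)\sse\R(A)$ forces $\R(B)\perp\N(A)$ and in particular $\N(A)\cap\R(B)=\0$; from there $\N(AB)=\N(B)\sse\N(A)\sse\N(A^*)\sse\N((AB)^*)$, and quasiposinormality plus closed range gives hypo-EP. Two small remarks. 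First, you do not actually need Bouldin's Criterion for the closed-range step: since $\R(B)\sse\N(A)^\perp$ and $A|_{\N(A)^\perp}$ is bounded below (injective with closed range), one has $\|ABx\|\ge c\|Bx\|$ for all $x$, and closedness of $\R(AB)$ follows by the same elementary Cauchy-sequence argument the paper uses in the proof of Proposition \ref{P:2.2}(b); this avoids the degenerate-angle bookkeeping when $\N(A)$ or $\R(B)$ is trivial (a vacuous case, but one Bouldin's angle definition technically does not address). Second, your closing observation that $B$ need not be posinormal is exactly right and is in fact the point of the Johnson--Vinoth formulation, which assumes only that $B$ has closed range.
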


\vskip0pt
As an immediate corollary \cite[Corollary 14]{JV}, Johnson and Vinoth
obtain the following result, which was described in the introduction of
this paper in the following equivalent way ``Every positive-integer power
of a posinormal operator with closed range is posinormal with closed range'': 

\vskip6pt\noi
{\bf Corollary (Johnson--Vinoth).}
{\it Let\/ $A$ be a hypo-EP operator on\/ $\H.$ Then\/ $A^n$ is hypo-EP for
any integer}\/ ${n\ge1}$.

\vskip6pt\noi
Djordjevi\'{c}'s paper \cite{DD}, whose Theorem 3 is Corollary \ref{C:4.7}
of the preceding section, contains a generalization of the well-known
Hartwig--Katz Theorem, which characterizes when the product of two EP
matrices is EP$.$ Here's Djordjevi\'{c}'s generalization
(\cite[Theorem 1]{DD}, 2000): 

\begin{theorem}
[{Djordjevi\'{c}'s generalization of the Hartwig--Katz Theorem}]\label{T:5.1}
{\it $\;\;$Let\/ 
${A,B\in\B[\H]}$ be EP operators$.$ Then the following statements
are equivalent}\/.
\begin{itemize}
\item[(a)]
$AB$ {\it is an EP operator}\/;
\vskip4pt
\item[(b)]
${\R(AB)=\R(A)\cap\R(B)}$ {\it and}\/ ${\N(AB)=\N(A)+\N(B)}$;
\vskip4pt
\item[(c)]
${\R(AB)=\R(A)\cap\R(B)}$ {\it and\/ $\N(AB)$ is the closure of}\/
${\N(A)+\N(B)}$.
\end{itemize}
\end{theorem}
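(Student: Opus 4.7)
The plan is to establish the cycle $(b) \Rightarrow (c) \Rightarrow (a) \Rightarrow (b)$. The first implication is immediate: since $\N(AB)$ is closed, the equality $\N(AB) = \N(A) + \N(B)$ forces the algebraic sum to coincide with its closure, and the range clause is identical in the two statements.

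For $(c) \Rightarrow (a)$ I would use a clean orthogonal-complement argument. Under (c), $\R(A) \cap \R(B)$ is closed as an intersection of closed subspaces, so $\R(AB)$ is closed and $\R(AB)^\perp = \N((AB)^*)$ by the closed-range theorem. Because $A$ and $B$ are EP, $\R(A)^\perp = \N(A^*) = \N(A)$ and $\R(B)^\perp = \N(B)$, so
\[
(\R(A) \cap \R(B))^\perp = \overline{\R(A)^\perp + \R(B)^\perp} = \overline{\N(A) + \N(B)}.
\]
Combining with the kernel clause of (c) yields $\N((AB)^*) = \overline{\N(A) + \N(B)} = \N(AB)$, which together with closedness of $\R(AB)$ gives $\R(AB) = \R((AB)^*)$, so $AB$ is EP.

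For $(a) \Rightarrow (b)$ the easy halves of both identities come for free. We have $\R(AB) \subseteq \R(A)$ trivially, and $\R(AB) = \R((AB)^*) = \R(B^*A^*) \subseteq \R(B^*) = \R(B)$ using EP of $AB$ and of $B$, so $\R(AB) \subseteq \R(A) \cap \R(B)$. Dually, $\N(B) \subseteq \N(AB)$ is trivial and $\N(A) = \N(A^*) \subseteq \N(B^*A^*) = \N((AB)^*) = \N(AB)$, so $\N(A) + \N(B) \subseteq \N(AB)$.

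The reverse inclusions are the main obstacle. I would work in the matrix representation with respect to $\H = \R(B) \oplus \N(B)$, available because $B$ is EP: there $B = B_1 \oplus O$ with $B_1 = B|_{\R(B)}$ invertible, and $A$ has blocks $\big(\smallmatrix{A_{11} & A_{12} \cr A_{21} & A_{22} \cr}\big)$. The already-established inclusion $\R(AB) \subseteq \R(B) \oplus \{0\}$ forces $A_{21} = O$, so $A$ preserves $\R(B)$ and is upper-triangular in this basis. The EP condition on $A$ then constrains how $A_{12}$ interacts with $A_{11}$ and $A_{22}$, and a direct block computation should show $A_{12}(\N(A_{22})) \subseteq A_{11}(\R(B))$, yielding $\R(A) \cap \R(B) \subseteq \R(AB)$, and dually $\N(AB) \subseteq \N(A) + \N(B)$. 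The delicate point is the kernel identity at the algebraic level (without closure), which amounts to showing $\N(A) + \N(B)$ is already closed; I expect this to follow from the block structure together with $\R(AB)$ being closed.
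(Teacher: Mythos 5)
First, a point of context: the paper does not prove this theorem at all --- it is quoted from Djordjevi\'c \cite[Theorem 1]{DD} as background for the discussion of EP operators, so there is no in-paper proof to compare yours against. Your implications (b)$\Rightarrow$(c) and (c)$\Rightarrow$(a) are correct and complete: the first is immediate from the closedness of $\N(AB)$, and the orthogonal-complement computation in the second --- using $\R(A)^\perp=\N(A)$ and $\R(B)^\perp=\N(B)$ for EP operators together with $(\M_1\cap\M_2)^\perp=(\M_1^\perp+\M_2^\perp)^-$ --- does yield $\N((AB)^*)=\N(AB)$ and hence, via the closed-range theorem, $\R((AB)^*)=\R(AB)$.

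The gap is in (a)$\Rightarrow$(b), which is where the substance of the theorem lies. You correctly obtain the easy inclusions $\R(AB)\sse\R(A)\cap\R(B)$ and $\N(A)+\N(B)\sse\N(AB)$, and the decomposition $\H=\R(B)\oplus\N(B)$ with $B=B_1\oplus O$, $B_1$ invertible, and $A_{21}=O$ is a sensible setting. But the two reverse inclusions are exactly the content of the implication, and you do not prove them: ``a direct block computation should show $A_{12}(\N(A_{22}))\sse A_{11}(\R(B))$'' and ``I expect this to follow'' are conjectures, not arguments. It is not clear the computation is routine: EP-ness of $A$ gives $\R(A)=\R(A^*)$, which in your block form places $A_{12}(\N(A_{22}))$ inside $\R(A_{11}^*)$ rather than $\R(A_{11})$, and EP-ness of $AB$ must also enter somewhere. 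The danger of hand-waving here is real --- the finite-dimensional argument of the paper's Theorem 5.3 hinges on ``injective implies surjective,'' which fails in infinite dimensions, and Proposition 5.2 of the paper exhibits EP operators for which the naive infinite-dimensional analogue of a Hartwig--Katz condition breaks down. Likewise, your closing claim that $\N(A)+\N(B)$ is automatically closed is precisely the delicate point separating (b) from (c), and it needs an actual proof (a sum of two closed subspaces of a Hilbert space need not be closed). As written, roughly a third of the theorem --- and the hardest third --- is asserted rather than established.
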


\vskip6pt
Keep in mind that we are assuming EP (and hypo-EP) operators have
closed range.

\vskip6pt
We now turn our attention to the Hartwig--Katz Theorem. Our goal is to
provide a new, elementary proof of it based on a characterization of
${n\times n}$ matrices $A$ such that for every $n\times n$ matrix satisfying
${\R(AB)\sse\R(B)}$,
$$
\R(AB)=\R(A)\cap\R(B).
$$
Solving a problem that had been open for 25 years (see
\cite[p$.$ 98]{BasKatz}, 1969), Hartwig and Katz proved the following
result (\cite{HK}, 1997)  in which ``RS'' denotes {\it row space}:

\begin{HKthm}
Let\/ $A$ and\/ $B$ be\/ ${n\times n}$ EP matrices$.$ The
following are equivalent:
\begin{itemize}
\item[(a)]
${\R(AB)=\R(A)\cap R(B)}$ and\/ ${RS(AB)=RS(A)\cap RS(B)}$;
\vskip4pt
\item[(b)]
${\R(AB)\sse\R(B)}$ and\/ ${RS(AB)\sse\R(A)}$.
\vskip4pt
\item[(c)]
$AB$ is EP.
\end{itemize}
\end{HKthm}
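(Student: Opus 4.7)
The plan centers on establishing a key lemma: \emph{if $A$ is an $n\times n$ EP matrix and $B$ is any $n\times n$ matrix with $\R(AB)\sse\R(B)$, then $\R(AB)=\R(A)\cap\R(B)$.} To prove this, I would first observe that $\R(AB)=A\R(B)$, so the hypothesis is precisely that $\R(B)$ is $A$-invariant, and the inclusion $\R(AB)\sse\R(A)\cap\R(B)$ is immediate. For the reverse inclusion, note that $\R(A)\cap\R(B)$ is itself $A$-invariant (both $\R(A)$ and $\R(B)$ being so); moreover, because $A$ is EP we have $\R(A)=\N(A)^\perp$, so $A$ is injective on $\R(A)$ and hence on the finite-dimensional $A$-invariant subspace $\R(A)\cap\R(B)$. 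Consequently $A(\R(A)\cap\R(B))=\R(A)\cap\R(B)$; and since $\R(A)\cap\R(B)\sse\R(B)$, this image sits inside $A\R(B)=\R(AB)$, yielding $\R(A)\cap\R(B)\sse\R(AB)$.

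With the lemma in hand, the Hartwig--Katz Theorem becomes essentially bookkeeping, using that $A,B$ EP means $\R(A)=\R(A^*)=RS(A)$ and $\R(B)=\R(B^*)=RS(B)$. Implication (a)$\limply$(b) is immediate. For (c)$\limply$(b): if $AB$ is EP, then $\R(AB)=\R((AB)^*)=\R(B^*A^*)\sse\R(B^*)=\R(B)$, and likewise $RS(AB)=\R((AB)^*)=\R(AB)\sse\R(A)$. For (b)$\limply$(a), I would apply the lemma to the pair $(A,B)$ to obtain $\R(AB)=\R(A)\cap\R(B)$, and then apply it again to the pair $(B^*,A^*)$---noting that $B^*$ is EP because $B$ is, and that the row-space hypothesis $RS(AB)\sse\R(A)=\R(A^*)$ is exactly the invariance condition $\R(B^*A^*)\sse\R(A^*)$---to get $\R((AB)^*)=\R(B^*)\cap\R(A^*)$, i.e.\ $RS(AB)=RS(B)\cap RS(A)$. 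Finally, (a)$\limply$(c) follows because the two equalities in (a), combined with $\R(A)=\R(A^*)$ and $\R(B)=\R(B^*)$, force $\R(AB)=\R(A)\cap\R(B)=\R(A^*)\cap\R(B^*)=RS(AB)=\R((AB)^*)$, so $AB$ is EP.

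The main obstacle is the lemma itself: the crucial observation is that EP-ness of $A$ is exactly what makes $A$ act bijectively on every $A$-invariant subspace of $\R(A)$, and this is what converts the one-sided containment $\R(AB)\sse\R(A)\cap\R(B)$ into an equality. A secondary technical point is keeping the adjoint bookkeeping straight---applying the lemma in its transposed form to $(B^*,A^*)$ requires both that the adjoint of an EP matrix is again EP and the correct dualization of the row-space condition in (b); these are the places where an otherwise short argument could most easily be miscast.
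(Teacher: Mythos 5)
Your proposal is correct and follows essentially the same route as the paper: the paper's Theorem 5.3 ((a)$\Rightarrow$(b)) is exactly your key lemma (stated there under the slightly weaker hypothesis $\N(A^2)=\N(A)$, which for an EP matrix is equivalent to your observation that $A$ is injective on $\R(A)=\N(A)^\perp$), proved by the same device of showing $A$ acts bijectively on the $A$-invariant subspace $\R(A)\cap\R(B)$, and the paper's Corollary 5.4 then applies that lemma to the pairs $(A,B)$ and $(B^*,A^*)$ with the same adjoint/row-space bookkeeping you describe. The only cosmetic difference is that the paper isolates the lemma as a standalone equivalence ($\N(A^2)=\N(A)$ iff the range identity holds for all admissible $B$) and delegates part of the three-way equivalence to Djordjevi\'c's theorem, whereas you verify the full cycle directly.
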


\vskip0pt
Upon taking complex conjugates of all elements in a row space of an
${n\times n}$ matrix $A$ and then transposing, we obtain the column space
of $A^*$, the conjugate-transpose of $A.$ Thus the condition
${RS(AB)=RS(A)\cap RS(B)}$ of part (a) of the Hartwig-Katz Theorem is
equivalent to ${\R((AB)^*)=\R(A^*)\cap\R(B^*)}.$ Taking orthogonal
complements, we see ${\R((AB)^*)\!=\!\R(A^*)\cap\R(B^*)}$ is equivalent to
$\N(AB)\!=\!\N(A)+\N(B).$ Similarly, we see that ${RS(AB)\subseteq\R(A)}$ is
equivalent to $\N(A)\subseteq \N(AB).$ Thus, the Hartwig--Katz equivalent
conditions may be restated as follows for EP matrices $A$ and $B$:
\begin{itemize}
\item[(a)]
${\R(AB)=\R(A)\cap\R(B)}$ and ${\N(AB)=\N(A)+\N(B)}$;
\vskip4pt
\item[(b)]
${\R(AB)\sse\R(B)}$ and ${\N(A)\sse\N(AB)}$;
\vskip4pt
\item[(c)]
$AB$ is EP.
\end{itemize}

\vskip3pt\noi
We see that Djordjevi\'{c} in Theorem 5.1 has generalized to infinite
dimensional Hilbert space the equivalence of (a) and (c) of the
Hartwig--Katz Theorem but did not speak to the issue of generalizing the
equivalence of (b) and (c)$.$ Here's an example illustrating that the
equivalence of (b) and (c) does not generalize to infinite dimensions.

\vskip6pt\noi
\begin{proposition}\label{P:5.2}
{\it There exist EP operators\/ $A$ and\/ $B$ on\/ a Hilbert space $\H$ for
which\/ {\rm(i)} $\,\R(AB)\sse\R(B)$ and\/ {\rm(ii)} $\,{\N(A)\sse\N(AB)}$
such that\/ $AB$ is not EP}\/.
\end{proposition}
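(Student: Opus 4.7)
The plan is to exhibit a concrete pair of EP operators on $\H=\ell^2(\mathbb{Z})$ built from the bilateral shift, and to verify by direct calculation that (i) and (ii) hold while $AB$ fails to be EP. Let $\{e_n\}_{n\in\mathbb{Z}}$ be the standard basis of $\H$, let $U$ be the bilateral shift $Ue_n=e_{n+1}$, and let $\H_+$ and $\H_-$ denote the closed linear spans of $\{e_n:n\ge0\}$ and $\{e_n:n<0\}$, respectively. Let $P$ be the orthogonal projection of $\H$ onto $\H_+$. I would set
$$
A=P \quad\hbox{and}\quad B=U^*.
$$

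First I would record that $A$ and $B$ are EP: $P$ is a self-adjoint projection with closed range $\R(A)=\R(A^*)=\H_+$, and $U^*$ is unitary, hence invertible, with $\R(B)=\R(B^*)=\H$. Inclusion (i) is then automatic since $\R(B)=\H$. For (ii), the key observation is that $U^*\H_-\sse\H_-$ (because $U^*e_n=e_{n-1}$ and $n<0$ forces $n-1<0$); consequently, for $x\in\N(A)=\H_-$ one has $Bx\in\H_-=\N(A)$ and therefore $ABx=P(Bx)=0$, proving $\N(A)\sse\N(AB)$.

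The remaining step, and the main content of the argument, is to show $AB$ is not EP. A short basis computation gives $ABe_n=Pe_{n-1}$, which equals $e_{n-1}$ for $n\ge1$ and $0$ for $n\le0$; hence $\R(AB)=\H_+$, which is closed. On the other hand $(AB)^*=UP$ satisfies $(AB)^*e_n=e_{n+1}$ for $n\ge0$ and $0$ for $n<0$, whence $\R((AB)^*)$ is the closed linear span of $\{e_m:m\ge1\}$, a proper subspace of $\H_+$. Therefore $\R(AB)\ne\R((AB)^*)$ and $AB$ is not EP; in fact $AB$ is not even posinormal, since $e_0\in\R(AB)\setminus\R((AB)^*)$.

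There is no serious technical obstacle --- the construction is completely explicit. The conceptual reason the finite-dimensional implication ``(b)$\Rightarrow$(c)'' of the Hartwig--Katz Theorem breaks down here is dimension-counting: in finite dimensions, an invertible $B$ leaving $\N(A)$ invariant must act bijectively on $\R(A)=\N(A)^\perp$, which forces the EP identity for $AB$; in infinite dimensions a unitary can properly shrink a half-space (here $U\H_+\subsetneq\H_+$), and it is exactly this asymmetry between $\R(AB)$ and $\R((AB)^*)$ that prevents $AB$ from being EP.
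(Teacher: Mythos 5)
Your construction is correct, and it is essentially the paper's: the paper also takes $A$ to be an orthogonal projection and $B$ a unitary built from shifts (on $\ell^2\oplus\ell^2$), and under the obvious identification of $\ell^2(\mathbb{Z})$ with $\ell^2\oplus\ell^2$ your pair $(P,U^*)$ is unitarily equivalent to theirs, with the same failure mode ($AB$ coposinormal but not posinormal since $\R(AB)\supsetneq\R((AB)^*)$). Your bilateral-shift packaging is just a slightly cleaner presentation of the same example.
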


\begin{proof}
Let ${(e_j)_{j=0}^\infty}$ be the natural basis of $\ell^2$ so that the
sequence $e_j$ has $1$ as its $j$-th term and zeros elsewhere:
${(a_0, a_1,a_2,\ldots)}=\sum_{n=0}^\infty a_j e_j.$ Let $\M$ be the one
dimensional subspace of $\ell^2$ spanned by $e_0$ and let $P_{\M}$ be the
orthogonal projection of $\ell^2$ onto $\M.$ Let $F$ be the forward shift on
$\ell^2$,
$F\left(\sum_{j=0}^\infty a_je_j\right)=\sum_{j=0}^\infty a_je_{j+1}$, so
that $F^*$ is the backward shift,
$F^*\left(\sum_{j=0}^\infty a_je_j\right)=\sum_{j=1}^\infty a_je_{j-1}.$
Note that $F^*F=I$, while ${FF^*=P_{{\M}^\perp}}$.
 
\vskip6pt
Let ${\H=\ell^2\oplus\ell^2}$, let $I$ be the identity on $\ell^2$, and
define $A$ and $B$ on $\H$ by
$$
A=\big(\smallmatrix{0 & 0 \cr
                    0 & I \cr}\big)
\quad\hbox{and}\quad
B=\big(\smallmatrix{F & P_{\M} \cr
                    0 & F^* \cr}\big).
$$
Observe that $B$ is unitary (hence surjective):
$$
BB^*=\big(\smallmatrix{F & P_{\M} \cr
                       0 & F^* \cr}\big)
     \big(\smallmatrix{F^* & 0 \cr
                       P_{\M} & F \cr}\big)
    =\big(\smallmatrix{P_{{\M}^\perp}+P_{\M} & \;P_{\M}F \cr
                       F^*P_{\M}          & \;I    \cr}\big)
    =\big(\smallmatrix{I & 0 \cr
                       0 & I \cr}\big),
$$
while
$$
B^*B=\big(\smallmatrix{F^* & 0 \cr
                        P_{\M} & F \cr}\big)
     \big(\smallmatrix{F & P_{\M} \cr
                       0 & F^* \cr}\big)
    =\big(\smallmatrix{I    & \;F^*P_{\M}          \cr
                       P_{\M}F & \;P_{\M}+P_{{\M}^\perp} \cr}\big)
    =\big(\smallmatrix{I & 0 \cr
                       0 & I \cr}\big).
$$
Because $B$ is surjective, clearly (i) ${\R(AB)\subseteq\R(B).}$ Also,
$\N(A)$, which equals ${\ell^2 \oplus 0}$ is clearly contained in the kernel
of
$AB=\big(\smallmatrix{0 & 0   \cr
                      0 & F^* \cr}\big).$
The operator $B$ is EP because it's invertible, and $A$ is EP because it's
self-adjoint$.$ However,
$AB=\big(\smallmatrix{0 & 0 \cr
                      0 & F^* \cr}\big)$
is not EP, because its range is $0 \oplus \ell^2$, but the range of its
adjoint is $0 \oplus {\M}^\perp$, a proper subset of $\R(AB)$ (making $AB$ 
coposinormal, but not posinormal).
\end{proof}

\vskip0pt
Note that the condition ${\R(AB)=\R(A)\cap\R(B)}$ holds whenever $A$, $B$,
and $AB$ are all EP operators$.$ We rely on the following to yield the
Hartwig--Katz Theorem as a straightforward corollary.

\begin{theorem}\label{T:5.3}
{\it Let ${A\!:\CC^n\to\CC^n}$ be linear. The following are equivalent}\/.
\begin{itemize}
\item[(a)]
$\N(A^2)=\N(A)$;
\vskip4pt
\item[(b)]
{\it for each linear ${B\!:\CC^n\to\CC^n}$ satisfying}\/ $\R(AB)\sse\R(B)$,
$$
\R(AB)=\R(A)\cap\R(B);
$$
\item[(c)]
$\R(A^2)=\R(A)$;
\vskip4pt
\item[(d)]
$r(A^2)=r(A)$, {\it where $r$ denotes rank}\/.
\end{itemize}
\end{theorem}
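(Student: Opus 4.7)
The plan is to establish (a)$\iff$(c)$\iff$(d) by rank-nullity, then close the cycle with (b)$\Rightarrow$(c) and (a)$\Rightarrow$(b). The equivalence of (a), (c), (d) is immediate: $\N(A)\sse\N(A^2)$ and $\R(A^2)\sse\R(A)$ always hold, and rank-nullity gives $r(A)+\dim\N(A)=n=r(A^2)+\dim\N(A^2)$, so set equality in either of these two inclusions is equivalent to $r(A)=r(A^2)$.

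For (b)$\Rightarrow$(c), I would specialize (b) to $B=A$: the hypothesis $\R(AB)=\R(A^2)\sse\R(A)=\R(B)$ is automatic, and (b) then yields $\R(A^2)=\R(A)\cap\R(A)=\R(A)$, which is (c).

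The crux is (a)$\Rightarrow$(b). Given $B$ with $\R(AB)\sse\R(B)$, I would first observe that $\R(AB)=A(\R(B))$, so the hypothesis is exactly that $\R(B)$ is $A$-invariant. Hence $A$ restricts to an operator on $\R(B)$, and this restriction inherits condition (a), because
\[
\N((A|_{\R(B)})^2)=\N(A^2)\cap\R(B)=\N(A)\cap\R(B)=\N(A|_{\R(B)}).
\]
From (a) alone (together with rank-nullity applied to $A|_{\R(B)}$) one gets a Fitting-type decomposition $\R(B)=A(\R(B))\oplus(\N(A)\cap\R(B))$: the intersection is $\{0\}$ because any $v=Aw$ belonging to $\N(A)$ satisfies $A^2w=0$, hence $Aw=v=0$ by (a); dimensions then add up to $\dim\R(B)$. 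The same argument applied to $A$ on $\CC^n$ itself yields $\CC^n=\R(A)\oplus\N(A)$. Now for $y\in\R(A)\cap\R(B)$, decompose $y=y_1+y_2$ with $y_1\in A(\R(B))$ and $y_2\in\N(A)\cap\R(B)$. Since $y_1\in\R(A)$, we have $y_2=y-y_1\in\R(A)\cap\N(A)=\{0\}$, so $y=y_1\in A(\R(B))=\R(AB)$. The reverse inclusion $\R(AB)\sse\R(A)\cap\R(B)$ is immediate from the hypothesis together with $\R(AB)\sse\R(A)$.

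The main obstacle I anticipate is simply recognizing the two structural facts that make the argument go through: first, that $\R(AB)\sse\R(B)$ is exactly $A$-invariance of $\R(B)$, which legitimizes the restriction $A|_{\R(B)}$; and second, that condition (a) descends to this restriction, which is what enables the Fitting-type decomposition on $\R(B)$ and forces the ``extra'' summand $y_2$ to vanish when combined with the global decomposition on $\CC^n$.
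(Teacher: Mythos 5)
Your proof is correct, and the overall architecture ((a)$\iff$(c)$\iff$(d) by rank--nullity, (b)$\Rightarrow$(c) by taking $B=A$) matches the paper's. The one place where you genuinely diverge is the crux implication (a)$\Rightarrow$(b). The paper works directly with the subspace $M=\R(A)\cap\R(B)$: it notes that $M$ is $A$-invariant, that $A|_M$ is injective (any $v\in M$ with $Av=0$ is of the form $Aw$, so $w\in\N(A^2)=\N(A)$ and $v=0$), hence surjective by finite-dimensionality, and then concludes in one line that $M=A(M)\sse A(\R(B))=\R(AB)$. You instead restrict $A$ to $\R(B)$, verify that condition (a) descends to $A|_{\R(B)}$, and build two Fitting-type decompositions, $\R(B)=A(\R(B))\oplus(\N(A)\cap\R(B))$ and $\CC^n=\R(A)\oplus\N(A)$, then kill the nilpotent-direction component of a vector in $\R(A)\cap\R(B)$. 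Both arguments hinge on the same kernel computation ($v=Aw$, $Av=0$, so $w\in\N(A^2)=\N(A)$), and both are finite-dimensional in an essential way (your dimension count for the direct sum; the paper's injective-implies-surjective step). The paper's route is shorter; yours is slightly heavier but makes the underlying structure --- that (a) is exactly the Fitting condition $\CC^n=\R(A)\oplus\N(A)$ --- explicit, which some readers may find more illuminating. Either is a complete proof.
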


\begin{proof}
(a)$\implies$(b): Suppose that ${\N(A^2)=\N(A)}$ and that
${B\!:\CC^n\to\CC^n}$ is a linear mapping whose range is invariant for $A$;
i.e., ${\R(AB)\sse\R(B)}.$ 

\vskip6pt
Observe that ${M\!:=\R(A)\cap\R(B)}$ is also invariant for $A.$ Consider
$$
A|_M\!:\R(A)\cap\R(B)\rightarrow\R(A)\cap\R(B).
$$
Suppose that ${v\in\R(A)\cap\R(B)}$ is such that ${Av=0}.$ Then, because
${v=Aw}$ for some ${w\in\CC^n}$, we see ${0=AAw=A^2w}$, so that
${w\in\N(A^2)=\N(A)}$ and ${0=Aw=v}.$ Thus, $A|_M$ is injective and hence
surjective$.$ We see
$$
\R(A)\cap\R(B)=A|_M(\R(A)\cap\R(B))= A(\R(A)\cap\R(B))\sse A(\R(B))=\R(AB).
$$
The reverse inclusion, ${\R(AB)\sse\R(A)\cap\R(B)}$ holds because
$\R(AB)$ is clearly contained in $\R(A)$ while ${\R(AB)\sse\R(B)}$ by
hypothesis$.$ Hence, (a)$\implies$(b).

\smallskip
To see that (b)$\implies$(c), apply (b) with ${B=A}$. That (c) implies
(d) is clear$.$ By the rank-nullity theorem, condition (d) implies that
$\N(A^2)$ and $\N(A)$ have the same dimension, but $\N(A)\sse\N(A^2)$, so
that (a) holds. We've shown (d) implies (a), completing the proof.
\end{proof}

\vskip0pt
Recall that for all posinormal operators $T$, we have $\N(T^2)=\N(T)$ by
Proposition \ref{P:2.1}(c) (which is Proposition 3 of \cite{JKKP})$.$ Thus,
condition (a) of Theorem \ref{T:5.3} holds whenever $A$ is EP.

\begin{corollary}[{Hartwig--Katz Theorem}]\label{C:5.4}
{\it Suppose that\/ $A$ and\/ $B$ are EP operators on\/ $\CC^n\!.$ Then\/
$AB$ is EP if and only if\/ {\rm(i)} $\,{\R(AB)\sse\R(B)}$ and}\/ {\rm(ii)}
$\,{\N(A)\sse\N(AB)}$.
\end{corollary}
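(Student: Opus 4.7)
The plan is to derive both directions from Theorem \ref{T:5.3}, using the fact that every EP operator is posinormal and therefore satisfies $\N(A^2) = \N(A)$ by Proposition \ref{P:2.1}(c), which is precisely condition (a) of Theorem \ref{T:5.3}.

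For the forward direction, suppose $AB$ is EP. Since $B$ is EP, $\R(AB) \subseteq \R(A^*B^*)^* \ldots$ wait, more directly: $\R(AB) = \R((AB)^*) = \R(B^*A^*) \subseteq \R(B^*) = \R(B)$, giving (i). For (ii), $\N(AB) = \N((AB)^*) = \N(B^*A^*) \supseteq \N(A^*) = \N(A)$, giving (ii). So this direction is essentially immediate from the EP definition.

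For the reverse direction, I would apply Theorem \ref{T:5.3} twice. First, $A$ is EP hence posinormal, so $\N(A^2) = \N(A)$; combined with hypothesis (i), Theorem \ref{T:5.3} yields
\[
\R(AB) = \R(A) \cap \R(B).
\]
Second, the key move is to rewrite hypothesis (ii) by taking orthogonal complements: $\N(A) \subseteq \N(AB)$ is equivalent to $\R((AB)^*) \subseteq \R(A^*)$, i.e.\ $\R(B^*A^*) \subseteq \R(A^*)$. Since $B^*$ is EP (hence posinormal with $\N((B^*)^2) = \N(B^*)$), I would apply Theorem \ref{T:5.3} with $B^*$ in the role of $A$ and $A^*$ in the role of $B$, obtaining
\[
\R((AB)^*) = \R(B^*A^*) = \R(B^*) \cap \R(A^*).
\]

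Finally I would chain these equalities using the EP property of $A$ and $B$ (so $\R(A) = \R(A^*)$ and $\R(B) = \R(B^*)$):
\[
\R(AB) = \R(A) \cap \R(B) = \R(A^*) \cap \R(B^*) = \R((AB)^*),
\]
which, together with the closedness of $\R(AB)$ (automatic on $\CC^n$), shows $AB$ is EP.

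There isn't a serious obstacle here; the only subtlety is recognizing that hypothesis (ii) is the ``adjoint form'' of a range-containment hypothesis of the same shape as (i), so that Theorem \ref{T:5.3} can be invoked symmetrically on $AB$ and $(AB)^* = B^*A^*$. Once that reformulation is in hand, the proof is a one-line chain of equalities.
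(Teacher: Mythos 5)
Your proof is correct and follows essentially the same route as the paper's: the forward direction is read off directly from the EP identities $\R(T)=\R(T^*)$ and $\N(T)=\N(T^*)$, and the reverse direction applies Theorem \ref{T:5.3} twice, once to the pair $(A,B)$ using hypothesis (i) and once to $(B^*,A^*)$ using the orthogonal-complement form of hypothesis (ii), then chains the resulting equalities via $\R(A)=\R(A^*)$ and $\R(B)=\R(B^*)$. No gaps.
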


\begin{proof}
Suppose that $AB$ is EP$.$ Then 
$$
\R(AB)=\R(B^*A^*)\sse\R(B^*)= \R(B),
$$
where the final equality holds because $B$ is EP$.$ Thus, (i) holds.
Similarly
$$
\R(B^*A^*)=\R(AB)\sse\R(A)=\R(A^*).
$$
Thus, ${\R(A^*)^\perp\sse\R(B^*A^*)^\perp}$; that is, ${\N(A)\sse\N(AB)}$,
so that (ii) holds.

\vskip6pt
Now suppose that (i) and (ii) hold for EP operators $A$ and $B.$ 
\vskip6pt
Because $A$ is EP, ${\N(A^2)=\N(A)}$; thus, since we assuming (i) holds,
Theorem \ref{T:5.3}, (a)$\implies$(b) yields
\begin{equation}\label{RAB}
\R(AB)\sse\R(A)\cap\R(B).
\end{equation}
Because $B^*$ is EP, ${\N(B^{*2})=\N(B^*)}$; taking orthogonal complements,
(ii) yields $\R(B^*A^*) \subseteq \R(A^*).$ Hence, by Theorem \ref{T:5.3},
(a)$\implies$(b), 
\begin{equation}\label{RASBS}
\R(B^*A^*) = \R(B^*)\cap\R(A^*).
\end{equation}
Because ${\R(A)=\R(A^*)}$ and ${\R(B)=\R(B^*)}$ for the EP operators $A$ and
$B$, (\ref{RAB}) and (\ref{RASBS}) yield
$$
\R(AB)=\R((AB)^*),
$$
so that $AB$ is EP, as desired.
\end{proof}

\vskip2pt
We note that Koliha (\cite{Kh}, 1999) obtained a simple proof of the
Hartwig--Katz Theorem quite different from ours.

\bibliographystyle{amsplain}

\end{document}